\documentclass[11pt]{amsart}
\usepackage{extsizes}
\usepackage{amsfonts}
\usepackage{latexsym}
\usepackage{color}
\numberwithin{equation}{section}

\input{xypic}
\xyoption{all}

\usepackage{graphicx,amsmath,amssymb,amsthm,array,enumerate,mhsetup,mathtools,comment,multicol,tabularx}

\newtheorem{theorem}{Theorem}[section] 
 
 \newtheorem{innercustomgeneric}{\customgenericname}
 \providecommand{\customgenericname}{}

\newtheorem{lemma}[theorem]{Lemma}

\theoremstyle{definition}
\newtheorem{conjecture}{Conjecture}[section]
\newtheorem{remark}{Remark}

\newcommand{\norm}[1]{\ensuremath{N(\ideal{#1})}}

\newcommand{\authorfootnotes}{\renewcommand\thefootnote{\@fnsymbol\c@footnote}}%
\makeatother

\begin{document}

\def\Ext{{\rm Ext}}
\def\Hom{{\rm Hom}}
\def\Ind{{\rm Ind}}

\def\res{\mathop{Res}}

\def\GL{{\rm GL}}
\def\SL{{\rm SL}}
\def\SO{{\rm SO}}
\def\O{{\rm O}}

\def\R{\mathbb{R}}
\def\C{\mathbb{C}}
\def\Z{\mathbb{Z}}
\def\Q{\mathbb{Q}}
\def\A{\mathbb{A}}

\def\w{\wedge}

\def\Cat{\mathcal{C}}
\def\HC{{\rm HC}}
\def\HCat{\Cat^\HC}
\def\proj{{\rm proj}}

\def\to{\rightarrow}
\def\To{\longrightarrow}

\def\1{1\!\!1}
\def\dim{{\rm dim}}

\def\th{^{\rm th}}
\def\isom{\approx}

\def\CE{\mathcal{C}\mathcal{E}}
\def\E{\mathcal{E}}

\def\dis{\displaystyle}
\def\f{{\bf f}}                 
\def\g{{\bf g}}
\def\T{{\rm T}}              
\def\omegatil{\tilde{\omega}}  
\def\H{\mathcal{H}}            
\def\Dif{\mathfrak{D}}      
\def\W{W^{\circ}}           
\def\Whit{\mathcal{W}}      
\def\ringO{\mathcal{O}}     
\def\S{\mathcal{S}}      
\def\M{\mathcal{M}}      
\def\K{{\rm K}}          
\def\h{\mathfrak{h}} 
\def\norm{{\rm N}}       
\def\trace{{\rm Tr}} 
\def\ctilde{\tilde{C}}

\title{Additive Sums of Generalized Shifted Ternary Divisor Function}

\author{Alia Hamieh}
\address[Alia Hamieh]{University of Northern British Columbia, Department of Mathematics and Statistics, 3333 University Way, Prince George, BC V2N4Z9, Canada}
\email{alia.hamieh@unbc.ca}

\author{Do Nhat Tan Vo}
\address[Do Nhat Tan Vo]{University of Lethbridge, Department of Mathematics and Computer Science, 4401 University Dr W, Lethbridge, AB T1K3M4, Canada}
\email{donhattan.vo@uleth.ca}

\thanks{Research of Alia Hamieh was partially supported by the NSERC Discovery Grant RGPIN-2018-06313.}
\thanks{The research of Do Nhat Tan Vo was partially supported by the University of Lethbridge (ULGRA) and by his supervisors H. Kadiri and N. Ng through their respective NSERC Discovery Grants RGPIN-2020-06731, RGPIN-2020-06032, and RGPIN-2026-05876.}

\keywords{Shifted convolution sums, Shifted ternary divisor functions, Additive divisor problems}

\subjclass[2020]{Primary 11N37; Secondary 11M06, 11N69}
\date{\today}

\begin{abstract}
In this paper, we prove a conditional result that yields an asymptotic formula with an explicit main term and a power-saving error term for the generalized shifted ternary additive divisor sum given by  
\[D_{\mathcal{I},\mathcal{J}}(X,1)=\sum_{n \leq X} \tau_{\mathcal{I}}(n) \tau_{\mathcal{J}}(n + 1),\]
where $\mathcal{I}=\{\alpha_1,\alpha_2,\alpha_3\}$ and $\mathcal{J} = \{\beta_1, \beta_2, \beta_3\}$, with $|\alpha_i|, |\beta_i|\leq \delta$ for $i=1,2,3$, and $0<\delta<1/44$. The proof of our main theorem relies on assuming an averaged level of distribution for the generalized ternary additive divisor function $\tau_{\mathcal{J}}$ in arithmetic progressions up to the level $2/3$.
\end{abstract}

\maketitle

\section{Introduction}
The Riemann zeta function
\begin{equation*}
\zeta(s)=\sum_{n=1}^{\infty}\frac{1}{n^s}
=\prod_p(1-p^{-s})^{-1},
\qquad \Re(s)>1,
\end{equation*}
admits an analytic continuation to $\mathbb{C}\setminus\{1\}$, with a simple pole of residue $1$ at $s=1$, and satisfies the functional equation
\begin{equation*}
    \pi^{-\frac{s}{2}}\Gamma\left(\frac{s}{2}\right)\zeta(s)=\pi^{-\frac{1-s}{2}}\Gamma\left(\frac{1-s}{2}\right)\zeta(1-s).
\end{equation*}
By its Euler product, $\zeta(s)\neq0$ for $\Re(s)>1$; the negative even integers are the trivial zeros of $\zeta(s)$, produced by the functional equation. The Riemann hypothesis asserts that every nontrivial zero of $\zeta(s)$ lies on the critical line $\Re(s)=1/2$, and a great deal of work in analytic number theory is conditional on it and its generalizations. A closely related conjecture is the Lindel\"of hypothesis, which states that
\begin{equation}\label{eq:lindelof}
    \zeta(1/2+it)=\mathcal{O}\left((1+|t|)^\epsilon\right),
\end{equation}
for every $\epsilon>0$. Backlund \cite{Backlund} showed that the Riemann hypothesis implies \eqref{eq:lindelof}, and the best known unconditional bound toward it is due to Bourgain \cite{bourgain}, namely $\zeta(1/2+it)=\mathcal{O}\left((1+|t|)^{13/84+\epsilon}\right)$. In pursuit of the Lindel\"of hypothesis, one is led to study $|\zeta(1/2+it)|$ on average over intervals of length $T$, through the moments
\begin{equation*}
    I_k(T)=\int_0^T\left|\zeta\left(\tfrac12+it\right)\right|^{2k}\,dt.
\end{equation*}
Indeed, $\zeta(1/2+it)=\mathcal{O}(t^\epsilon)$ for every $\epsilon>0$ if and only if $I_k(T)=\mathcal{O}(T^{1+\epsilon})$ for every $k\in\mathbb{N}$ and every $\epsilon>0$. Beyond this equivalence, moments of $\zeta(s)$ (and of $L$-functions more generally) have become an extensive area of study in their own right, with consequences for the size, zero distribution, value distribution, and non-vanishing of these functions. In 1918, Hardy and Littlewood \cite{Hardy} proved that $I_1(T)\sim T\log T$. Ingham \cite{second} sharpened this in 1926 to
\begin{equation}\label{eq:ingham-k1}
I_{1}(T)=TP_{1}(\log T)+\mathcal{O}\left(T^{1/2+\epsilon}\right),
\end{equation}
with $P_1$ a polynomial of degree $1$, and also treated the fourth moment, showing
\begin{equation}\label{eq:ingham-k2}
    I_2(T)=\frac{T}{2\pi^2}\left(\log T\right)^4+\mathcal{O}\left(T(\log T)^3\right).
\end{equation}
In 1979, Heath-Brown \cite{HB} improved the error term in $I_2(T)$ to
\begin{equation}\label{eq:HB-k2}
I_{2}(T)=TP_{4}(\log T)+\mathcal{O}\left(T^{7/8+\epsilon}\right),
\end{equation}
with $P_4$ of degree $4$. Aside from refinements of the error terms in \eqref{eq:ingham-k1} and \eqref{eq:HB-k2}, these remain the only unconditional asymptotic formulas known for $I_k(T)$, leaving $k\geq3$ entirely conjectural.  Keating and Snaith \cite{keating-snaith}, using random matrix theory, conjectured that
\begin{equation}\label{eq:keating-snaith}
    I_k(T)\sim c_kT(\log T)^{k^2},
\qquad T\to\infty,
\end{equation}
for explicit constants $c_k$. The only values of $k$ for which \eqref{eq:keating-snaith} is known unconditionally remain $k=1,2$, as above. For $k\geq3$, substantial partial progress exists: assuming the Riemann hypothesis, Soundararajan \cite{Sounda} obtained $I_k(T)\ll_kT(\log T)^{k^2+\epsilon}$, which Radziwiłł \cite{Radz} sharpened to the sharp exponent for $k<2.18$, and Harper \cite{harper2013sharp} then established $I_k(T)\ll_kT(\log T)^{k^2}$ for all $k>0$ under the Riemann hypothesis; unconditionally, matching lower bounds of the correct order were established by Radziwiłł and Soundararajan \cite{RS}, and subsequently by Heap and Soundararajan \cite{HS} in the remaining small-moment range. Underlying both the unconditional formulas above and the conjectural picture for larger $k$ is a common technical object: additive correlations of divisor functions. Indeed, in Ingham's and Heath-Brown's proofs of \eqref{eq:ingham-k2} and \eqref{eq:HB-k2}, one is led to 
\begin{equation}\label{eq:correlation}
D_k(X,h)=\sum_{n\leq X}\tau_k(n)\tau_k(n+h),
\qquad
\tau_k(n)=\sum_{n_1\cdots n_k=n}1,
\end{equation}
$h\in\mathbb{N}$, $h\neq0$. Heath-Brown's improvement of $I_2(T)$, in particular, rests on an asymptotic formula for $D_2(X,h)$ building on Estermann's work \cite{Estermann}. Beyond $k=2$, however, the sums $D_k(X,h)$ are less understood: even an asymptotic formula for $D_3(X,h)=\sum_{n\leq X}\tau_3(n)\tau_3(n+h)$ remains open.

In studying $I_k(T)$, it is sometimes more convenient to work with the shifted moment
\begin{equation*}
    I_{\mathcal{I},\mathcal{J}}(\omega)=\int_{-\infty}^\infty\left(\prod_{j=1}^k\zeta\left(\frac{1}{2}+\alpha_j+it\right)\zeta\left(\frac{1}{2}+\beta_j-it\right)\right)\omega(t)\,dt,
\end{equation*}
where $\mathcal{I}=\{\alpha_1,\ldots,\alpha_k\}$, $\mathcal{J}=\{\beta_1,\ldots,\beta_k\}$ are multisets of complex shifts and $\omega$ is a smooth, compactly supported weight on an interval of length a constant multiple of $T$. This approach, introduced by Ingham \cite{second} for the second moment, is technically easier than working with $I_k(T)$ directly. Ng \cite{Nathan} applied it to the sixth moment, proving an asymptotic formula for $I_{\mathcal{I},\mathcal{J}}(\omega)$ conditional on an additive divisor conjecture for a smoothed version of $D_{\mathcal{I},\mathcal{J}}(X,h)$ below. At zero shifts this gives, for some $\nu\in[1/2,2/3)$ and $C\geq0$,
\begin{equation*}
    I_3(T)=TP_9(\log T)+\mathcal{O}\left(T^{\frac{3\nu/2+C+1}{2+C}+\epsilon}\right),
\qquad T\to\infty,
\end{equation*}
i.e. the Keating--Snaith conjecture \eqref{eq:keating-snaith} for $k=3$, conditionally. Since the main conditional input in Ng's result is an additive divisor conjecture for this shifted correlation, we are motivated to study it directly:
\begin{equation*}
D_{\mathcal{I},\mathcal{J}}(X,h)
=
\sum_{n\leq X}
\tau_{\mathcal{I}}(n)\tau_{\mathcal{J}}(n+h),
\qquad
\tau_{\mathcal{I}}(n)
=
\sum_{n_1\cdots n_k=n}
n_1^{-\alpha_1}\cdots n_k^{-\alpha_k}.
\end{equation*}
Before turning to the shifted setting, we first review what is known unconditionally when all shifts vanish. Write $D_{k,l}(X,h)=\sum_{n\leq X}\tau_k(n)\tau_l(n+h)$, so that $D_k(X,h)=D_{k,k}(X,h)$ in the notation of \eqref{eq:correlation}. Essentially every unconditional asymptotic formula for $D_{k,l}(X,h)$ is confined to the case $l=2$: the pair correlation $D_{2,2}(X,h)=D_2(X,h)$ was treated by Ingham, Estermann, Heath-Brown, Deshouillers-Iwaniec, Motohashi, and Duke-Friedlander-Iwaniec \cite{Ingham,Estermann,HB,Deshouillers,Motohashi1,duke}, while various cases of $D_{k,2}(X,h)$ for $k\geq3$ were treated by Hooley, Heath-Brown, Linnik, Motohashi, Drappeau, and Topaco\u{g}ullar{\i} \cite{HL,Brown,Linnik,Motohashi,Drappeau,Topacogullari}.
Once $k=l\geq3$, however, no unconditional asymptotic formula is known at all, and one must turn instead to conjectures. Ivi\'{c} \cite{Ivic} conjectured in 1997 that $D_k(X,h)\sim XP_{2k-2}(\log X;h)$ for a suitable polynomial $P_{2k-2}$ of degree $2k-2$. Conrey and Gonek \cite{Gonek} proposed a more explicit smooth main term $m_k(X,h)$ together with the error bound $\mathcal{O}\left(X^{1/2+\epsilon}\right)$, uniformly for $1\leq h\leq X^{1/2}$. In 2019, Ng and Thom \cite{Thom} gave a probabilistic-heuristic derivation of a simplified additive divisor conjecture: for $\epsilon>0$, $k\geq2$, and $1\leq h\leq X^{1-\epsilon}$,
\begin{equation*}
    D_k(X,h)\sim\frac{c_k(h)}{((k-1)!)^2}X(\log X)^{2k-2},
\qquad X\to\infty,
\end{equation*}
where $c_k(h)=C_kf_k(h)$ is given explicitly in terms of an Euler product $C_k$ and a multiplicative function $f_k$ built from the divisor function $\tau_{k-1}$. Tao \cite{Tao} independently derived, via a different probabilistic heuristic, an alternative local-factor expression $c_k(h)=\prod_p\mathfrak{S}_{k,k,p}(h)$. Ng and Thom verified in \cite[Section 4]{Thom} that the two expressions for $c_k(h)$ agree. These heuristics, together with the special case $k=l=3$, motivate the search for an unconditional or conditional proof of an asymptotic formula for $D_3(X,h)$. Nguyen \cite{Nguyen} takes up exactly this problem, conditional on the following conjecture regarding the averaged level of distribution of $\tau_k(n)$ in arithmetic progressions.

\begin{conjecture}\label{D}
Let $\epsilon>0$. Then, for any $k \geq 1$, we have, uniformly in $1\leq h \leq X^{\frac{k-1}{k}}$, the upper bound
\begin{equation*}
    \sum_{q\leq X^{\frac{k-1}{k}}}\left|\sum_{\substack{n\leq X\\n\equiv h \bmod q}}\tau_k(n)-\frac{1}{\varphi\left(\frac{q}{(h,q)}\right)}\sum_{\substack{n\leq X \\ \left(n,\frac{q}{(h,q)}\right)=1}}\tau_k(n)\right|\ll_\epsilon X^{\frac{1}{2}+\epsilon},
\end{equation*}
as $X\rightarrow \infty$, where the implied constant is independent of $h$ and depends only on $\epsilon$.
\end{conjecture}
Nguyen proves \cite[Theorem~1]{Nguyen} the following result.
\begin{theorem}\label{thm:Nguyen}
Assume Conjecture~\ref{D} for $k=3$. Let $\epsilon>0$ be given. Then, for any composite shift $1 \leq |h| \leq X^{\frac{2}{3}}$, we have
\begin{equation*}
    D_{3}(X,h)=M_{3}(X,h)+E_{3}(X,h),\quad \text{as}\;\;X \rightarrow \infty,
\end{equation*}
where
\begin{align*}
    M_{3}(X,h)&=3\;\operatorname{Res}_{\substack{s=1\\w_1=w_2=0}}\left(\frac{(X+h)^sX^{\frac{1}{3}(w_1+2w_2)}}{sw_1w_2}\zeta^3(s)\zeta(w_1+w_2+1)\zeta(w_2+1)A_1(s,w_1,w_2)\right)\\
    &\quad-3\;\operatorname{Res}_{\substack{s=1\\w_2=1,w_1=0}}\left(\frac{X^{\frac{1}{3}(w_1+2w_2+s)}}{sw_1w_2}\zeta^3(s)\zeta(w_1+w_2+1-s)\zeta(w_2+1-s)A_2(s,w_1,w_2)\right)\\
    &\quad+\;\operatorname{Res}_{\substack{s=1\\w_1=w_2=1}}\left(\frac{X^{\frac{1}{3}(w_1+w_2+s)}}{sw_1w_2}\zeta^3(s)\zeta(w_1+1-s)\zeta(w_2+1-s)A_3(s,w_1,w_2)\right)\\
    &\quad+\mathcal{O}\left(X^{0.897}\right),
\end{align*}
where $A_1$, $A_2$, and $A_3$ are the arithmetic factors defined in \cite[Theorem~1]{Nguyen}. Moreover,
$E_3(X,h)\ll_\epsilon X^{\frac12+\epsilon}$.
\end{theorem}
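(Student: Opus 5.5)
The plan follows the Dirichlet hyperbola strategy that reduces the additive problem to the averaged distribution of $\tau_3$ in progressions. Write $m=n+h$ and factor $n=n_1n_2n_3$. Using the symmetry of $\tau_3(n)=\sum_{n_1n_2n_3=n}1$, split according to the sizes of the factors relative to $X^{1/3}$. In the principal region all three factors are $\leq X^{1/3}$ or, more conveniently, one arranges that two of the factors, say $n_1\leq n_2$, satisfy $n_1n_2\leq X^{2/3}$. Set $q=n_1n_2$; then the condition $n\equiv 0 \bmod q$ turns the sum into
\begin{equation*}
\sum_{q\leq X^{2/3}}\tau_2^{\ast}(q)\sum_{\substack{m\leq X+h\\ m\equiv h \bmod q}}\tau_3(m),
\end{equation*}
where $\tau_2^\ast$ records the restricted ranges of $n_1,n_2$. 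The factor $3$ and the inclusion--exclusion structure of the three residues in $M_3$ (with the signs $+,-,+$) come from the three regions of the hyperbola decomposition, namely which of $n_1,n_2,n_3$ exceed $X^{1/3}$. These are handled symmetrically, with the overlaps subtracted.

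Next, apply Conjecture~\ref{D} with $k=3$ to the inner progression sums. Since the weights $\tau_2^\ast(q)\ll q^\epsilon$, replacing $\sum_{m\equiv h \,(q)}\tau_3(m)$ by $\varphi(q/(h,q))^{-1}\sum_{(m,q/(h,q))=1}\tau_3(m)$ costs $\ll X^{1/2+\epsilon}$ uniformly in $|h|\leq X^{2/3}$. This accounts for $E_3(X,h)$. The remaining expression is
\begin{equation*}
\sum_{q}\frac{\tau_2^\ast(q)}{\varphi(q/(h,q))}\sum_{\substack{m\leq X+h\\(m,q/(h,q))=1}}\tau_3(m),
\end{equation*}
which I would evaluate by Perron's formula in three variables. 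The variable $s$ handles the $m$-sum, producing $\zeta^3(s)$ times a local correction at primes dividing $q/(h,q)$. The variables $w_1,w_2$ handle the ranges of $n_1$ and $n_2$, whose cutoffs are $X^{1/3}$-type powers; this yields the factors $X^{(w_1+2w_2)/3}$ and so on. Interchanging summations, the Dirichlet series in $q$ factors as $\zeta(w_1+w_2+1)\zeta(w_2+1)$ times an Euler product $A_1$ absolutely convergent near the poles, and the dependence on $h$ enters through $(h,q)$. In the other regions the roles of $s$ and $w_i$ are coupled, which gives the shifts $w_i+1-s$ and the arithmetic factors $A_2,A_3$. Shifting contours to the left and picking up the residues at $s=1$, $w_i\in\{0,1\}$ gives the main terms $M_3(X,h)$.

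The main obstacle is the contour-shifting error: bounding the shifted integrals to obtain $\mathcal{O}(X^{0.897})$ uniformly in $h$. This requires convexity or subconvexity bounds for $\zeta$ on the shifted lines, together with control of $A_j$ in a wider strip, with the truncation heights optimized. I would first try moving $s$ to $\Re s=1/2+\epsilon$ and $w_i$ to slightly negative real parts, using the mean-value estimate for $\zeta^3$ and pointwise bounds for the remaining factors. I would then balance the Perron truncation parameter $T$ against $X^{1/3}/T$-type losses, expecting an exponent of about $0.897$. The hypothesis that $h$ is composite would be used to ensure the local factors at $p\mid h$ do not create extra poles or degenerate cases in $A_j$.
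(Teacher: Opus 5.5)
The paper contains no proof of this statement. It is quoted from Nguyen, and the Remark right after it says the authors could verify neither the $\mathcal{O}(X^{0.897})$ term nor the claimed range of $h$. Adapting Nguyen's Section~3, they only reach $\mathcal{O}(X^{0.924+\epsilon})$, and they restrict Theorem~\ref{maintheorem} to $h=1$.

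Your outline follows the same route as Section~\ref{sec:main-proof}: Hooley-type splitting with weights $3,-3,1$, the level-of-distribution hypothesis, multidimensional truncated Perron, and residues at $s=1$, $w_i\in\{0,1\}$. Everything that would establish the stated exponent therefore lies in the error analysis. That is exactly the step you leave as ``balance the truncation parameters, expecting an exponent of about $0.897$'', which is an expectation, not an argument. When the paper carries out this computation with $\lambda=1/6$, $|\sigma_2|=5/22$, $T_1=X^{1/11}$, $T_2=X^{5/33}$, the binding term is the residue at the moving pole $w_2=\alpha_3-\alpha_1-w_1$ of $\zeta(1+w_1+w_2+\alpha_1-\alpha_3)$. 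It is bounded in \eqref{eqn:error-res-1} by $X^{(4\delta-|\sigma_2|)/3+\epsilon}$, which saves only $X^{5/66}$ against the main term and gives $X^{61/66}\approx X^{0.924}$. To claim $0.897$ you would have to specify the contours, the bounds for $\zeta$ and the $A_j$ on them, and the truncation heights, and check that every resulting term is $\ll X^{0.897}$. Nothing in your sketch indicates how to get past that bottleneck.

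There is a second gap where you invoke Conjecture~\ref{D}. In the two regions with $n_3\le X^{1/3}$, the progression sums run over $h<m\le qX^{1/3}+h$, an endpoint that depends on $q$; this is precisely why $s$ couples with $w_1,w_2$ in the second and third residues. Conjecture~\ref{D} controls the errors for all $q\le X^{2/3}$ at one common endpoint $X$, with shifts $h\le X^{2/3}$ relative to that endpoint. So it does not give $\sum_q|E(qX^{1/3}+h;h,q)|\ll X^{1/2+\epsilon}$, and ``this accounts for $E_3(X,h)$'' is justified only in the first region. Cutting $q$ into short ranges and applying the conjecture at many endpoints loses a power of $X$ (about $X^{3/4}$ for small $h$), which could at best be absorbed into the $\mathcal{O}$-term. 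Worse, once $h$ exceeds roughly $(qX^{1/3})^{2/3}$ the conjecture does not apply at all, and for $h$ near $X^{2/3}$ that happens for essentially every $q\le X^{2/3}$. The paper avoids this by assuming the maximal form Conjecture~\ref{conjecture} and taking $h=1$.

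Uniformity in $h$ and the role of ``composite'' are likewise guessed rather than derived. Note also that, read literally, the main term in Conjecture~\ref{D} is wrong when $(h,q)>1$: for $q=h$ it equals $\sum_{n\le X}\tau_3(n)$, whereas the left side is $\sum_{n\le X,\,h\mid n}\tau_3(n)$. So the $h$-dependence you intend to read off ``through $(h,q)$'' cannot be extracted from the hypothesis as stated.
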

\begin{remark}
We have studied the proof of \cite[Theorem~1]{Nguyen} but were not able to verify the stated error term of size $\mathcal{O}\left(X^{0.897}\right)$. By adapting the same line of argument used in \cite[Section~3]{Nguyen}, we were able to obtain an error term of size $\mathcal{O}\left(X^{0.924+\epsilon}\right)$. The details are given in the proof of the main theorem in the second author's Master's thesis. We were also unable to verify that the stated asymptotic formula in \cite[Theorem~1]{Nguyen} holds throughout the claimed range of shifts. Since the argument for general $h$ appears to require additional care, we restrict our attention in this paper to the case $h=1$.
\end{remark}
Under Conjecture \ref{D} for all $k$, Nguyen also establishes the leading-order main term in $M_{k,l}(X,h)$ for general $k,l$ and composite shift $1\leq h\leq X^{(k-1)/k}$, and verifies that this recovers the Ng-Thom \cite{Thom} and Tao \cite{Tao} heuristics above in the special case $k=l=3$; see \cite[Theorem~2, Corollary~2]{Nguyen}. To place Conjecture \ref{D} in context, Table
\ref{tab:divisor_distribution} collects the best known exponents of distribution for $\tau_k$ in arithmetic progressions. Even for $k=3$, the strongest known unconditional exponent remains well below the
level $2/3$ required by Conjecture \ref{D}. A useful survey of results on the distribution of generalized divisor functions in arithmetic progressions may be found in the introduction of \cite{Nguyen2}.

\begin{table}[h]
\centering
\caption{Known results on the exponent of distribution of $\tau_k$
in arithmetic progressions}
\label{tab:divisor_distribution}
\begin{tabular}{|c|c|c|}
\hline
$k$ & Exponent of distribution & References \\
\hline
$2$ & $\frac{2}{3}$ &
Heath-Brown \cite{HB} (1979) \\
\hline
$3$ & $\frac{1}{2}+\frac{1}{30}$ &
Sharma \cite{Sharma} (2024) \\
\hline
$4$ & $\frac{1}{2}$ &
Linnik \cite{Linnik} (1961) \\
\hline
$k\geq4$ & $\frac{8}{3k+4}$ &
Lavrik \cite{Lavrik} (1965) \\
\hline
$5$ & $\frac{9}{20}$ &
Friedlander-Iwaniec \cite{FI} (1985) \\
\hline
$6$ & $\frac{5}{12}$ &
Friedlander-Iwaniec \cite{FI} (1985) \\
\hline
$k\geq7$ & $\frac{8}{3k}$ &
Friedlander-Iwaniec \cite{FI} (1985) \\
\hline
$k\geq5$ & $\geq\frac12$ (open) &
Open problem \\
\hline
\end{tabular}
\end{table}

For $k=3$, Sharma's exponent $\frac12+\frac1{30}$ is proved for square-free moduli and for certain prime-power moduli; more precisely, for $q=p^\gamma$ with $p$ odd and $\gamma\geq 28$ \cite{Sharma}. It improves the earlier exponents obtained by Friedlander-Iwaniec \cite{Friedlander}, Heath-Brown \cite{Brown}, and Fouvry-Kowalski-Michel \cite{Fouvry}.

We extend Nguyen's result by presenting a conditional proof of the complete asymptotic expansion of $D_{\mathcal{I},\mathcal{J}}(X,1)$ for $\mathcal{I}=\{\alpha_1,\alpha_2,\alpha_3\}$, $\mathcal{J}=\{\beta_1,\beta_2,\beta_3\}$ with $0\neq|\alpha_i|,|\beta_i|\leq\delta$, $i=1,2,3$, for some positive constant $\delta$, assuming the following shifted analogue of Conjecture \ref{D} at $k=3$.

\begin{conjecture}\label{conjecture}
Let $\epsilon>0$, $0<\delta<1$, and
$\mathcal J=\{\beta_1,\beta_2,\beta_3\}$ with
$|\beta_i|\leq\delta$ for $i=1,2,3$. Define
\[E(Y;1,q)
=\sum_{\substack{n\leq Y\\ n\equiv1\bmod q}}
\tau_{\mathcal J}(n)-\frac{1}{\varphi(q)}
\sum_{\substack{n\leq Y\\ (n,q)=1}}
\tau_{\mathcal J}(n).
\]
Then there exists $\Delta<1$ such that
\[\sum_{q\leq X^{2/3}}
\max_{1\leq Y\leq X}
|E(Y;1,q)|\ll_{\epsilon,\delta}
X^{\Delta+\epsilon},\]
as $X\to\infty$.
\end{conjecture}

\begin{remark}
Nguyen's conjecture is stated for a fixed upper endpoint, whereas we use a maximal form. The maximal form provides the uniformity needed in the argument below, since the shifted weights require the Abel summation formula and lead to progression sums with varying upper endpoint $Y=qt+1$.
\end{remark}

Consider the multisets $\mathcal I=\{\alpha_1,\alpha_2,\alpha_3\}$ and
$\mathcal J=\{\beta_1,\beta_2,\beta_3\}$. For $j\in\{1,2,3\}$, define
\begin{align}\label{eq:A1-intro}
A_j^{(1)}(w_1,w_2)
&=
\prod_p
\left(1-\frac{1}{p^{1+w_1+w_2+\alpha_1-\alpha_3}}\right)
\left(1-\frac{1}{p^{1+w_2+\alpha_2-\alpha_3}}\right)
\nonumber\\
&\quad\times
\Bigg(
1+
\prod_{\substack{i=1\\i\neq j}}^3
\left(1-\frac{1}{p^{1-\beta_j+\beta_i}}\right)
\Bigg(
\frac{1}{p^{1+w_1+w_2+\alpha_1-\alpha_3}-1}
+
\frac{1}{p^{1+w_2+\alpha_2-\alpha_3}-1}
\nonumber\\
&\hspace{14em}
+
\frac{1}{
\left(p^{1+w_1+w_2+\alpha_1-\alpha_3}-1\right)
\left(p^{1+w_2+\alpha_2-\alpha_3}-1\right)}
\Bigg)
\Bigg).
\end{align}
We further set
\begin{align}\label{eq:A2-intro}
A_j^{(2)}(w_1,w_2)
&=
A_j^{(1)}
\left(w_1,w_2-1+\alpha_3+\beta_j\right),\end{align}
and \begin{align}\label{eq:A3-intro}
A_j^{(3)}(w_1,w_2)
&=
A_j^{(1)}
\left(w_1-w_2,w_2-1+\alpha_3+\beta_j\right).
\end{align}
For any function $F(\alpha_1,\alpha_2,\alpha_3)$, we write
\[
\sum_{\mathrm{cyc}} F(\alpha_1,\alpha_2,\alpha_3)
=
F(\alpha_1,\alpha_2,\alpha_3)
+
F(\alpha_2,\alpha_3,\alpha_1)
+
F(\alpha_3,\alpha_1,\alpha_2).
\]
In this notation, each cyclic permutation is applied to all occurrences
of $\alpha_1,\alpha_2,\alpha_3$ in the corresponding summand, including
those occurring in the arithmetic factors $A_j^{(1)}$.

We define
\begin{align}\label{eq:Main-final}
Main_{\mathcal I,\mathcal J}(X)
&=\sum_{\mathrm{cyc}}\sum_{j=1}^3
\frac{X^{1-\beta_j-\alpha_3}}
{1-\beta_j-\alpha_3}
\prod_{\substack{i=1\\i\neq j}}^3
\zeta(1-\beta_j+\beta_i)\nonumber\\
&\qquad\times
\zeta(1+\alpha_1-\alpha_3)
\zeta(1+\alpha_2-\alpha_3)
A_j^{(1)}(0,0).
\end{align}

\begin{theorem}\label{maintheorem}
Assume Conjecture~\ref{conjecture} with
$\Delta<\frac{61}{66}$. Let $\epsilon>0$ and let
$\mathcal I=\{\alpha_1,\alpha_2,\alpha_3\}$ and
$\mathcal J=\{\beta_1,\beta_2,\beta_3\}$, where
\[
\alpha_i\neq\alpha_j,\qquad
\beta_i\neq\beta_j\qquad (i\neq j),
\]
and
\[
0\neq |\alpha_i|,|\beta_i|\leq\delta,
\qquad
0<\delta<\frac1{44}.
\]
Then, as $X\to\infty$,
\[D_{\mathcal I,\mathcal J}(X,1)
=Main_{\mathcal I,\mathcal J}(X)
+\mathcal O_{\epsilon,\delta}
\left(
X^{\frac{61}{66}+\frac{10}{3}\delta+\epsilon}
\right),
\]
where $Main_{\mathcal I,\mathcal J}(X)$ is defined in
\eqref{eq:Main-final}.
\end{theorem}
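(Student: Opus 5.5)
The approach follows Nguyen's method from \cite[Section~3]{Nguyen}, adapted to the shifted weights. First I would open up $\tau_{\mathcal I}(n)=\sum_{n_1n_2n_3=n}n_1^{-\alpha_1}n_2^{-\alpha_2}n_3^{-\alpha_3}$ and use the symmetric hyperbola decomposition in three variables. Each $n\le X$ has a factorization in which one variable is large, $n_3>X^{1/3}$ say, and the other two satisfy $n_1\le n_2$ with $n_1\le X^{1/3}$ and $n_1n_2\le X^{2/3}$. Up to an inclusion--exclusion over boundary regions, this writes $D_{\mathcal I,\mathcal J}(X,1)$ as a sum over the three cyclic choices of the ``large'' variable. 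The cyclic choices explain the $\sum_{\mathrm{cyc}}$ in \eqref{eq:Main-final}, with $\alpha_3$ attached to the long variable. For fixed $q=n_1n_2\le X^{2/3}$, the remaining sum is
\[
\sum_{n_3} n_3^{-\alpha_3}\,\tau_{\mathcal J}(qn_3+1)
=\sum_{\substack{m\le X+1\\ m\equiv 1 \bmod q}} \tau_{\mathcal J}(m)\Big(\tfrac{m-1}{q}\Big)^{-\alpha_3},
\]
restricted to a range $n_3>\max(n_2,X^{1/3})$, or the analogous one. I would remove the weight $((m-1)/q)^{-\alpha_3}$ by Abel summation. This produces progression sums $\sum_{m\le Y,\,m\equiv1 (q)}\tau_{\mathcal J}(m)$ with varying endpoints $Y=qt+1\le X+1$, which is exactly why the maximal form of Conjecture~\ref{conjecture} is needed.

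Next I would replace each progression sum by $\varphi(q)^{-1}\sum_{m\le Y,(m,q)=1}\tau_{\mathcal J}(m)$. The total error, summed over $q\le X^{2/3}$ with the weights $n_1^{-\alpha_1}n_2^{-\alpha_2}$ and the Abel-summation factor, is controlled by Conjecture~\ref{conjecture}. The shifts lose at most $X^{O(\delta)}$, giving $X^{\Delta+c\delta+\epsilon}$. The weights $n_1^{-\alpha_1}n_2^{-\alpha_2}$ need no separate treatment: with $|\alpha_i|\le\delta$ they are absorbed into this loss, and by hypothesis $\Delta<61/66$.

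For the main term I would evaluate $\sum_{m\le Y,(m,q)=1}\tau_{\mathcal J}(m)$ by Perron's formula. The Dirichlet series is $\prod_i\zeta(s+\beta_i)\prod_{p\mid q}\prod_i(1-p^{-s-\beta_i})$, with simple poles at $s=1-\beta_j$ since the $\beta_i$ are distinct. The residues give $Y^{1-\beta_j}\prod_{i\ne j}\zeta(1-\beta_j+\beta_i)\,g_j(q)/(1-\beta_j)$, where $g_j(q)=\prod_{p\mid q}\prod_i(1-p^{-1+\beta_j-\beta_i})$. The error comes from shifting to $\Re s=1/2+\epsilon$ using subconvexity or mean-value bounds for $\zeta^3$. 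Integrating back against the $t$-weight, what remains is a double sum over $n_1,n_2$ of
\[
n_1^{-\alpha_1}n_2^{-\alpha_2}\,(n_1n_2)^{\alpha_3-1+\beta_j}\,\frac{g_j(n_1n_2)}{\varphi(n_1n_2)}\times(\text{powers of the cutoffs}).
\]
I would write this via a two-variable Mellin--Perron integral in $w_1,w_2$ over the ranges $n_1\le X^{1/3}$ and $n_1n_2\le X^{2/3}$. The generating Dirichlet series factors as $\zeta(1+w_1+w_2+\alpha_1-\alpha_3)\zeta(1+w_2+\alpha_2-\alpha_3)A_j^{(1)}(w_1,w_2)$, and the local-factor computation reproduces \eqref{eq:A1-intro}. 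The different hyperbola pieces give the shifted versions $A_j^{(2)}$ and $A_j^{(3)}$. Taking residues at $w_1=w_2=0$ produces \eqref{eq:Main-final}. The poles at the other points are distinct because the $\alpha_i$ are distinct and nonzero. My expectation is that their contributions, namely those involving $A_j^{(2)}$ and $A_j^{(3)}$, cancel across the cyclic and inclusion--exclusion pieces, or are of lower order with exponents $<61/66$.

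The hardest step is the error accounting that yields the exponent $61/66+\tfrac{10}{3}\delta$. Several sources must be balanced against each other:
\begin{itemize}
\item the contour shifts in $w_1,w_2$, truncated at height $T$;
\item the Perron truncation in $s$;
\item the small-$n_3$ boundary pieces, which are bounded trivially by $\tau_{\mathcal J}\ll X^{\epsilon}$ times a volume;
\item the conjectural error.
\end{itemize}
Each shift $|\alpha_i|,|\beta_i|\le\delta$ inflates a power of $X$ by up to $X^{\delta}$, and the cutoffs are at $X^{1/3}$ and $X^{2/3}$, so the losses accumulate to roughly $\tfrac{10}{3}\delta$. I would first try choosing an auxiliary truncation parameter $X^{\theta}$ for the small-variable ranges, optimizing $\theta$ against the Perron truncation height, and checking that the optimum is $61/66$. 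The condition $\delta<1/44$ should be exactly what keeps the contours to the left of every pole and the total exponent below $1$.
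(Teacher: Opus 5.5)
Your outline follows the paper's architecture: the Hooley-type decomposition of Lemma~\ref{general-Hooley}, Abel summation in the long variable with the maximal form of Conjecture~\ref{conjecture} at $Y=qt+1$, Perron in $s$ with residues at $s=1-\beta_j$, and a two-variable truncated Perron formula with generating series $\zeta(1+w_1+w_2+\alpha_1-\alpha_3)\zeta(1+w_2+\alpha_2-\alpha_3)A_j^{(1)}(w_1,w_2)$, whose residue at $w_1=w_2=0$ produces $Main_{\mathcal I,\mathcal J}(X)$. However, the two steps that carry the content of the theorem are left as expectations, and one of your fallbacks is false: the secondary terms are \emph{not} of lower order.

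Abel summation over $X^{1/3}<l_3\le X/q$ yields two families of main terms. The upper endpoint gives $X^{1-\beta_j-\alpha_3}$ times a sum over $l_1,l_2$ weighted by $l_1^{-\alpha_1}l_2^{-\alpha_2}q^{\alpha_3}g_j(q)/\varphi(q)$. Your summand omits the factor $q^{1-\beta_j}$ coming from $Y^{1-\beta_j}=(qt)^{1-\beta_j}$, and only the weight $q^{\alpha_3}$ produces the $\zeta$-factors you state. The lower endpoint $t=X^{1/3}$ gives $-X^{(1-\beta_j-\alpha_3)/3}$ times a sum weighted by $l_1^{-\alpha_1}l_2^{-\alpha_2}q^{1-\beta_j}g_j(q)/\varphi(q)$. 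This second family, the residues of the first at $(w_1,w_2)=(\alpha_3-\alpha_1,0),(0,\alpha_3-\alpha_2),(\alpha_2-\alpha_1,\alpha_3-\alpha_2)$, and the whole main term of the piece with $l_1,l_2,l_3\le X^{1/3}$ are all of size $X^{1+O(\delta)}$. So they must cancel \emph{exactly}, and proving this needs two ingredients you do not supply.
\begin{enumerate}
\item Identities between arithmetic factors from different cyclic copies. The factor $A_j^{(2)}(0,1-\beta_j-\alpha_2)=A_j^{(1)}(0,\alpha_3-\alpha_2)$ equals the cyclically permuted $A_j^{(1)}(\alpha_2-\alpha_3,0)$, because the two linear forms in \eqref{eq:A1-intro} are merely swapped. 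The factor $A_j^{(1)}(\alpha_2-\alpha_1,\alpha_3-\alpha_2)=A_j^{(3)}(1-\beta_j-\alpha_1,1-\beta_j-\alpha_2)$ is cyclically invariant, because both forms vanish.
\item Partial-fraction identities in $x_r=1-\beta_j-\alpha_r$, namely
\[
\frac{1}{x_3(x_2-x_3)}-\frac{1}{x_2(x_2-x_3)}-\frac{1}{x_2x_3}=0,
\qquad
\sum_{\mathrm{cyc}}\frac{1}{x_2(x_1-x_2)(x_2-x_3)}=-\frac{1}{x_1x_2x_3},
\]
where the last cancels the all-small piece.
\end{enumerate}
This bookkeeping is tied to the exact decomposition of Lemma~\ref{general-Hooley}. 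Your ordered version with $n_1\le n_2$ and $n_3>\max(n_2,X^{1/3})$ ignores that the weights $n_1^{-\alpha_1}n_2^{-\alpha_2}$ are not symmetric, and it moves the lower endpoint away from $X^{1/3}$.

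The exponent is likewise not derived, and the mechanism you guess (an auxiliary cutoff $X^\theta$, trivial bounds on small pieces) is not the source of $61/66$; those trivially bounded pieces cost only $X^{2/3+O(\delta)}$. The bottleneck is the upper-endpoint sum
\[
\Sigma_j^\star(X)=\sum_{l_1\le X^{1/3}}\sum_{l_2\le X^{2/3}/l_1}l_1^{\alpha_3-\alpha_1}l_2^{\alpha_3-\alpha_2}\frac{g_j(l_1l_2)}{\varphi(l_1l_2)}.
\]
Since it is multiplied by $X^{1-\beta_j-\alpha_3}$, it must be evaluated with a power saving. The paper does this in four steps.
\begin{enumerate}
\item Apply Lemma~\ref{lem:truncated-perron} on $\Re w_1=\Re w_2=2\delta+\epsilon$.
\item Move $w_2$ to $\Re w_2=-5/22$ using the Weyl bound.
\item Dispose of the moving pole $w_2=\alpha_3-\alpha_1-w_1$ by shifting that residue integral in $w_1$ to the \emph{right}, to $\Re w_1=5/22$, crossing no poles.
\item Shift $w_1$ to $\Re w_1=-3/11+\epsilon$ in the two remaining integrals.
\end{enumerate}
With $T_1=X^{1/11}$ and $T_2=X^{5/33}$, this gives an error $O(X^{-5/66+4\delta/3+\epsilon})$. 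Multiplying by $X^{1+2\delta}$ yields $X^{61/66+10\delta/3+\epsilon}$, and $\delta<1/44$ is exactly the condition $61/66+10\delta/3<1$. All other errors are smaller: $X^{\Delta+\delta}$, $X^{2/3+2\delta}$, and $X^{31/36+O(\delta)}$ from the lower-endpoint and all-small pieces.
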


The rest of the article is organized as follows. In
Section~\ref{sec:lemmas}, we collect the auxiliary lemmas needed for the proof of Theorem~\ref{maintheorem}, including a combinatorial decomposition of $\tau_{\mathcal I}(n)$ generalizing a lemma of Hooley, a Dirichlet series identity for $\tau_{\mathcal J}(n)$ restricted to $(n,h)=1$, a subconvexity bound for $\zeta(s)$ on vertical lines, and a multidimensional truncated Perron formula. Section~\ref{sec:main-proof} contains the proof of
Theorem~\ref{maintheorem}. We apply the combinatorial decomposition from Lemma~\ref{general-Hooley}, use Abel summation together with Conjecture~\ref{conjecture}, and evaluate the resulting sums by repeated contour shifts and residue calculations. The main terms are then combined to obtain the asymptotic formula in
Theorem~\ref{maintheorem}.
\subsection*{Acknowledgements.}
We would like to thank Professor Nathan Ng at the University of Lethbridge for making the valuable suggestion of using the multidimensional truncated Perron formula in Section \ref{sec:main-proof} and for many other useful discussions related to this work.

\section{Lemmas}\label{sec:lemmas}
We start with the following combinatorial lemma, which generalizes a lemma of Hooley \cite{HL} to the shifted divisor function
$\tau_{\mathcal I}(n)$.
\begin{lemma}\label{general-Hooley}
  Let $\mathcal{I}=\{\alpha_1,\alpha_2,\alpha_3\}$. If $n \leq X$, then
    \begin{equation*}
        \tau_{\mathcal{I}}(n)=\sum_{l_1l_2l_3=n}l_1^{-\alpha_1}l_2^{-\alpha_2}l_3^{-\alpha_3}= \Sigma_1(n)-\Sigma_2(n)+\Sigma_3(n),
    \end{equation*}
    where
    \begin{align*}
        \Sigma_1(n)&=\sum_{\substack{l_1l_2l_3=n\\ l_1l_2 \leq X^{2/3};\;l_1\leq X^{1/3}}}l_1^{-\alpha_1}l_2^{-\alpha_2}l_3^{-\alpha_3}+\sum_{\substack{l_1l_2l_3=n\\ l_2l_3 \leq X^{2/3};\;l_2\leq X^{1/3}}}l_1^{-\alpha_1}l_2^{-\alpha_2}l_3^{-\alpha_3}\\
        &\quad+\sum_{\substack{l_1l_2l_3=n\\ l_1l_3 \leq X^{2/3};\;l_3\leq X^{1/3}}}l_1^{-\alpha_1}l_2^{-\alpha_2}l_3^{-\alpha_3},\\
        \Sigma_2(n)&=\sum_{\substack{l_1l_2l_3=n\\ l_1l_2 \leq X^{2/3};\;l_1,l_3\leq X^{1/3}}}l_1^{-\alpha_1}l_2^{-\alpha_2}l_3^{-\alpha_3}+\sum_{\substack{l_1l_2l_3=n\\ l_2l_3 \leq X^{2/3};\;l_1,l_2\leq X^{1/3}}}l_1^{-\alpha_1}l_2^{-\alpha_2}l_3^{-\alpha_3}\\
        &\quad+\sum_{\substack{l_1l_2l_3=n\\ l_1l_3 \leq X^{2/3};\;l_2,l_3\leq X^{1/3}}}l_1^{-\alpha_1}l_2^{-\alpha_2}l_3^{-\alpha_3},\\
        \Sigma_3(n)&=\sum_{\substack{l_1l_2l_3=n\\ l_1,l_2,l_3  \leq X^{1/3}}}l_1^{-\alpha_1}l_2^{-\alpha_2}l_3^{-\alpha_3}.
    \end{align*}
\end{lemma}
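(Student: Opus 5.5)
This is a pointwise inclusion–exclusion identity, and I will prove it factorization by factorization. For a fixed ordered factorization $n=l_1l_2l_3$ with $n\le X$, the weight $l_1^{-\alpha_1}l_2^{-\alpha_2}l_3^{-\alpha_3}$ enters the right-hand side with the coefficient
\[
c(l_1,l_2,l_3)=\#\{\text{conditions in }\Sigma_1\text{ satisfied}\}-\#\{\text{in }\Sigma_2\}+\#\{\text{in }\Sigma_3\}.
\]
It therefore suffices to show $c=1$ for every ordered triple with product at most $X$. No weights need to be tracked, since every summand carries the same weight $l_1^{-\alpha_1}l_2^{-\alpha_2}l_3^{-\alpha_3}$.

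Write $l_i=X^{a_i}$ with $a_i\ge0$ and $a_1+a_2+a_3\le1$. Let $S=\{i: a_i\le 1/3\}$. Since the sum of the $a_i$ is at most $1$, at most two of them exceed $1/3$, so $|S|\ge1$.

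The main observation is that the pair conditions in $\Sigma_1$ and $\Sigma_2$ are essentially automatic. If $l_1\le X^{1/3}$ and $l_1l_2>X^{2/3}$, then $l_2>X^{1/3}$ and $l_3<X^{1/3}$. The case analysis then runs as follows.

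\emph{Case $|S|=3$.} All pair products are at most $X^{2/3}$. The three $\Sigma_1$ terms are all satisfied, as are the three $\Sigma_2$ terms and the single $\Sigma_3$ term, so $c=3-3+1=1$.

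\emph{Case $|S|=2$.} Say exactly one index $k$ has $a_k>1/3$. Then $\Sigma_3$ contributes $0$. Each $\Sigma_2$ term requires two specified small indices; for exactly one of the three terms these are the two indices in $S$. For that term I check the pair condition. For instance, the second $\Sigma_2$ term requires $l_1,l_2\le X^{1/3}$ and $l_2l_3\le X^{2/3}$, and here $k=3$. So whether the pair condition holds depends on the sizes. I will therefore track, for each arrangement, whether the relevant pair product is at most $X^{2/3}$.

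The bookkeeping is that the $\Sigma_1$ terms are indexed by (pair $\{i,i+1\}$, distinguished small index $i$) in cyclic order $1\to2\to3\to1$, and each $\Sigma_2$ term adds the other index of the pair's complement as small. Since these terms are not symmetric under all permutations, I will verify the three subcases $k=1,2,3$ separately. In each subcase I use that the product of the two indices with $a_i\le 1/3$ is automatically at most $X^{2/3}$. A pair containing $k$ has product at most $X^{2/3}$ if and only if the third factor is at least $n/X^{2/3}$; this condition appears in exactly one $\Sigma_1$ term and the matching $\Sigma_2$ term, so it cancels. The outcome I expect in every subcase is $c=1$.

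\emph{Case $|S|=1$.} Say $S=\{m\}$, with the other two exponents exceeding $1/3$. Then no $\Sigma_2$ or $\Sigma_3$ term applies. Among the $\Sigma_1$ terms, only the one whose distinguished small index is $m$ survives. Its pair is $m$ together with some $k\ne m$, and the pair product $X^{a_m+a_k}=X^{1-a_{k'}}<X^{2/3}$, where $k'$ is the remaining index. So $c=1$.

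The main obstacle is the careful bookkeeping in the case $|S|=2$. There the cyclic, non-symmetric choice of pairs in $\Sigma_1$ and $\Sigma_2$ must be matched term by term, and any mismatch would indicate a misprint in the index conditions. I would tabulate all $3\times 2$ configurations (position of the large index, and whether its pair product with the cyclic neighbour exceeds $X^{2/3}$) to confirm $c=1$ in each.
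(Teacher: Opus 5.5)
Your argument is correct, but it reaches the identity by a different route from the paper. You check pointwise, by cases on how many of $l_1,l_2,l_3$ are at most $X^{1/3}$, that each ordered factorization gets net coefficient $1$.

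The paper instead starts from inclusion--exclusion over the events $l_i>X^{1/3}$. Since all three cannot exceed $X^{1/3}$ when $n\le X$, this gives $\tau_{\mathcal I}(n)=\sum_i T_i-\sum_{i<j}T_{ij}+\Sigma_3(n)$, where $T_i$ and $T_{ij}$ are the sums restricted to $l_i>X^{1/3}$ and to $l_i,l_j>X^{1/3}$ respectively. The paper then just says ``combining the terms''. That unstated regrouping is cyclic. For example, $T_3-T_{13}$ is the sum over $l_1\le X^{1/3}<l_3$. Because $l_3>X^{1/3}$ forces $l_1l_2<X^{2/3}$, you may insert the pair condition and split off the part with $l_3\le X^{1/3}$. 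This gives exactly the first $\Sigma_1$ term minus the first $\Sigma_2$ term; likewise $T_1-T_{12}$ and $T_2-T_{23}$ give the second and third pairs.

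The paper's route is shorter and explains where the decomposition comes from: within each $\Sigma_1$-minus-$\Sigma_2$ pair, the pair condition is redundant. Your case check is self-contained and verifies the final identity directly. It also shows the one place where a pair condition is genuinely size-dependent, your case $|S|=2$, and why it cancels there. Your ``main observation'' ($l_1\le X^{1/3}<X^{2/3}<l_1l_2$ forces $l_3<X^{1/3}$) is precisely the paper's redundancy fact in contrapositive form.

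Two points of precision should be fixed.

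\begin{itemize}
\item In the case $|S|=1$, write $l_ml_k=n/l_{k'}<X^{2/3}$ rather than $X^{a_m+a_k}=X^{1-a_{k'}}$, since $n$ need not equal $X$.
\item In the case $|S|=2$, only the pair $\{k-1,k\}$ (indices mod $3$) carries a live condition. The other pair containing $k$ appears only in terms that require $l_k\le X^{1/3}$. Writing $[\cdot]$ for the indicator, the live $\Sigma_1$ terms contribute $1+[\,l_{k-1}l_k\le X^{2/3}\,]$, the single live $\Sigma_2$ term contributes $[\,l_{k-1}l_k\le X^{2/3}\,]$, and $\Sigma_3$ contributes $0$. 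This holds for each of $k=1,2,3$, and you should state it as the completed tally rather than as the outcome you ``expect''.
\end{itemize}
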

\begin{proof}
We have the identity
    \begin{align*}
        \sum_{\substack{l_1l_2l_3=n\\ l_1,l_2,l_3 \leq X^{1/3}}}l_1^{-\alpha_1}l_2^{-\alpha_2}l_3^{-\alpha_3}&=\sum_{l_1l_2l_3=n}l_1^{-\alpha_1}l_2^{-\alpha_2}l_3^{-\alpha_3}-\sum_{1\leq i \leq 3} \sum_{\substack{l_1l_2l_3=n\\ l_i > X^{1/3}}}l_1^{-\alpha_1}l_2^{-\alpha_2}l_3^{-\alpha_3}\\
        &\quad+\sum_{1\leq i_1<i_2 \leq 3}\sum_{\substack{l_1l_2l_3=n\\ l_{i_1},l_{i_2} > X^{1/3}}}l_1^{-\alpha_1}l_2^{-\alpha_2}l_3^{-\alpha_3}-\sum_{\substack{l_1l_2l_3=n\\ l_1,l_2,l_3>X^{1/3}}}l_1^{-\alpha_1}l_2^{-\alpha_2}l_3^{-\alpha_3}.
    \end{align*}
Then,
    \begin{align*}
        \tau_{\mathcal{I}}(n)&=\sum_{1\leq i \leq 3} \sum_{\substack{l_1l_2l_3=n\\ l_i > X^{1/3}}}l_1^{-\alpha_1}l_2^{-\alpha_2}l_3^{-\alpha_3}-\sum_{1\leq i_1<i_2 \leq 3}\sum_{\substack{l_1l_2l_3=n\\ l_{i_1},l_{i_2} > X^{1/3}}}l_1^{-\alpha_1}l_2^{-\alpha_2}l_3^{-\alpha_3}\\
        &\quad+\sum_{\substack{l_1l_2l_3=n\\ l_1,l_2,l_3 \leq X^{1/3}}}l_1^{-\alpha_1}l_2^{-\alpha_2}l_3^{-\alpha_3}\\
        &=\sum_{\substack{l_1l_2l_3=n\\ l_1 > X^{1/3}}}l_1^{-\alpha_1}l_2^{-\alpha_2}l_3^{-\alpha_3}+\sum_{\substack{l_1l_2l_3=n\\ l_2 > X^{1/3}}}l_1^{-\alpha_1}l_2^{-\alpha_2}l_3^{-\alpha_3}+\sum_{\substack{l_1l_2l_3=n\\ l_3 > X^{1/3}}}l_1^{-\alpha_1}l_2^{-\alpha_2}l_3^{-\alpha_3}\\
        &\quad-\sum_{\substack{l_1l_2l_3=n\\ l_1,l_2 > X^{1/3}}}l_1^{-\alpha_1}l_2^{-\alpha_2}l_3^{-\alpha_3}-\sum_{\substack{l_1l_2l_3=n\\ l_2,l_3 > X^{1/3}}}l_1^{-\alpha_1}l_2^{-\alpha_2}l_3^{-\alpha_3}-\sum_{\substack{l_1l_2l_3=n\\ l_1,l_3 > X^{1/3}}}l_1^{-\alpha_1}l_2^{-\alpha_2}l_3^{-\alpha_3}\\
        &\quad+\sum_{\substack{l_1l_2l_3=n\\ l_1,l_2,l_3  \leq X^{1/3}}}l_1^{-\alpha_1}l_2^{-\alpha_2}l_3^{-\alpha_3}.
        \end{align*}
Combining the terms yields the desired decomposition, and hence proves the lemma.
\end{proof}

Next, we consider a Dirichlet series generated by $\tau_{\mathcal{J}}(n)$.

\begin{lemma}\label{lemma:Dirichlet}
Let $\mathcal{J}=\{\beta_1,\beta_2,\beta_3\}$, where
$|\beta_i|\leq\delta$ for $i=1,2,3$, and let $h\geq1$.
For $\Re(s)>1+\delta$, we have
\begin{align*}
\sum_{(n,h)=1}\frac{\tau_{\mathcal J}(n)}{n^s}
&=
\zeta(s+\beta_1)\zeta(s+\beta_2)\zeta(s+\beta_3)\\
&\qquad\times
\prod_{p\mid h}
\left(1-\frac{1}{p^{s+\beta_1}}\right)
\left(1-\frac{1}{p^{s+\beta_2}}\right)
\left(1-\frac{1}{p^{s+\beta_3}}\right).
\end{align*}
\end{lemma}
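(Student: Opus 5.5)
The plan is to derive the identity from the multiplicativity of $\tau_{\mathcal J}$ and an Euler product expansion. First I will note that $\tau_{\mathcal J}$ is the Dirichlet convolution of the three completely multiplicative functions $n\mapsto n^{-\beta_1}$, $n\mapsto n^{-\beta_2}$, $n\mapsto n^{-\beta_3}$. Indeed, directly from the definition,
\[
\tau_{\mathcal J}(n)=\sum_{n_1n_2n_3=n}n_1^{-\beta_1}n_2^{-\beta_2}n_3^{-\beta_3}.
\]
Hence $\tau_{\mathcal J}$ is multiplicative, and so is $n\mapsto \tau_{\mathcal J}(n)\1_{(n,h)=1}$, since the indicator of $(n,h)=1$ is multiplicative.

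Next I will settle absolute convergence. For $\Re(s)>1+\delta$ we have $|\tau_{\mathcal J}(n)n^{-s}|\le \tau_3(n)n^{\delta-\Re(s)}$, and $\sum_n \tau_3(n)n^{-\sigma}=\zeta(\sigma)^3<\infty$ for $\sigma>1$. Thus the Dirichlet series converges absolutely, and the formal manipulations below are justified. In particular:
\begin{itemize}
\item the series factors as an Euler product over primes;
\item $\sum_n \tau_{\mathcal J}(n)n^{-s}$ equals the product of the three absolutely convergent series $\sum_m m^{-s-\beta_i}$.
\end{itemize}
Since $\Re(s+\beta_i)>1$, each of these three series is $\zeta(s+\beta_i)=\prod_p(1-p^{-s-\beta_i})^{-1}$.

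The Euler product is then
\[
\sum_{(n,h)=1}\frac{\tau_{\mathcal J}(n)}{n^s}=\prod_{p\nmid h}\sum_{k\ge0}\frac{\tau_{\mathcal J}(p^k)}{p^{ks}}.
\]
The local factors at $p\mid h$ are simply $1$. Each local factor at $p\nmid h$ equals $\prod_{i=1}^3(1-p^{-s-\beta_i})^{-1}$. This follows either from the convolution structure or from the explicit formula
\[
\tau_{\mathcal J}(p^k)=\sum_{a+b+c=k}p^{-a\beta_1-b\beta_2-c\beta_3},
\]
which is exactly the coefficient of $X^k$ in $\prod_i(1-p^{-\beta_i}X)^{-1}$.

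Finally, I will multiply and divide by the missing factors at $p\mid h$. Since $h$ has finitely many prime factors and each factor $1-p^{-s-\beta_i}$ is nonzero (as $|p^{-s-\beta_i}|<1$), this is legitimate. It gives
\[
\prod_{p\nmid h}\prod_i(1-p^{-s-\beta_i})^{-1}=\prod_i\zeta(s+\beta_i)\prod_{p\mid h}\prod_i(1-p^{-s-\beta_i}),
\]
which is the claimed identity.

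There is no real obstacle. The only point needing care is the absolute convergence in the region $\Re(s)>1+\delta$, which accounts for the shifts $\beta_i$ and justifies the rearrangements into Euler products.
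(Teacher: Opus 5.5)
Your proof is correct and follows the same route as the paper: expand $\zeta(s+\beta_1)\zeta(s+\beta_2)\zeta(s+\beta_3)$ as an Euler product, keep only the primes $p\nmid h$, and reinsert the finitely many factors at $p\mid h$. Your check of absolute convergence for $\Re(s)>1+\delta$ and your computation of the local factors $\tau_{\mathcal J}(p^k)$ supply justification that the paper leaves implicit.
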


\begin{proof}
We know
\begin{align*}
    \sum_{n=1}^\infty \frac{\tau_{\mathcal{J}}(n)}{n^s}&=\zeta(s+\beta_1)\zeta(s+\beta_2)\zeta(s+\beta_3)\\
    &=\prod_p\left(1-\frac{1}{p^{s+\beta_1}}\right)^{-1}\left(1-\frac{1}{p^{s+\beta_2}}\right)^{-1}\left(1-\frac{1}{p^{s+\beta_3}}\right)^{-1}.
\end{align*}
Then,
\begin{align*}
    \sum_{(n,h)=1}\frac{\tau_{\mathcal{J}}(n)}{n^s}&=\prod_{p\nmid h} \left(1-\frac{1}{p^{s+\beta_1}}\right)^{-1}\left(1-\frac{1}{p^{s+\beta_2}}\right)^{-1}\left(1-\frac{1}{p^{s+\beta_3}}\right)^{-1}\\
    &=\prod_{p} \left(1-\frac{1}{p^{s+\beta_1}}\right)^{-1}\left(1-\frac{1}{p^{s+\beta_2}}\right)^{-1}\left(1-\frac{1}{p^{s+\beta_3}}\right)^{-1}\\
    &\qquad \times \prod_{p|h} \left(1-\frac{1}{p^{s+\beta_1}}\right)\left(1-\frac{1}{p^{s+\beta_2}}\right)\left(1-\frac{1}{p^{s+\beta_3}}\right)\\
    &=\zeta(s+\beta_1)\zeta(s+\beta_2)\zeta(s+\beta_3)\prod_{p|h}\left(1-\frac{1}{p^{s+\beta_1}}\right)\left(1-\frac{1}{p^{s+\beta_2}}\right)\left(1-\frac{1}{p^{s+\beta_3}}\right).
\end{align*}
\end{proof}
We record a subconvexity bound for $\zeta(s)$ on vertical lines, which will let us control the size of $\zeta(s)$ off the critical line during the contour shifts in Section \ref{sec:main-proof}.

\begin{lemma}\label{lem:weyl-bound}
Suppose that $\lambda\geq0$ is such that
\[
\left|\zeta\left(\frac12+it\right)\right|
\ll_\epsilon (1+|t|)^{\lambda+\epsilon}.
\]
The classical Weyl bound allows us to take $\lambda=\frac16$.
Moreover, for $\frac12\leq\sigma\leq1$, $|t|\geq1$, and any
$\epsilon>0$,
\[
|\zeta(\sigma+it)|
\ll_\epsilon
(1+|t|)^{2\lambda(1-\sigma)+\epsilon}.
\]
\end{lemma}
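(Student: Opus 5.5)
This is the standard convexity argument: the Phragmén--Lindelöf principle applied to a strip between $\sigma=\frac12$ and $\sigma=1$.

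\emph{Endpoint bounds.}
\begin{itemize}
\item On $\sigma=\frac12$ we use the hypothesis $|\zeta(\frac12+it)|\ll(1+|t|)^{\lambda+\epsilon}$.
\item On $\sigma=1$ we use the classical bound $\zeta(1+it)\ll\log(2+|t|)\ll_\epsilon(1+|t|)^{\epsilon}$ for $|t|\ge1$.
\end{itemize}
To avoid logarithmic losses at the right edge, it is convenient to shift that edge slightly to $\sigma=1+\epsilon$. There $\zeta(s)\ll_\epsilon1$ trivially by absolute convergence, and the exponent $2\lambda(1-\sigma)$ changes by only $O(\epsilon)$ over $1\le\sigma\le1+\epsilon$.

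\emph{Setting up the auxiliary function.} To apply Phragmén--Lindelöf we need a function holomorphic and of finite order in the strip $S=\{\frac12\le\sigma\le1+\epsilon,\ |t|\ge1\}$. Consider
\[
f(s)=\zeta(s)\,(s+1)^{-\ell(s)},\qquad \ell(s)=\frac{2(\lambda+\epsilon)(1+\epsilon-s)}{1+2\epsilon},
\]
where $\ell$ is the linear function equal to $\lambda+\epsilon$ at $\sigma=\frac12$ and $0$ at $\sigma=1+\epsilon$. We use the principal branch of $\log(s+1)$, which is fine since $\Re(s+1)>0$ on $S$. Then
\[
|(s+1)^{-\ell(s)}|=|s+1|^{-\Re\ell(s)}e^{\Im\ell(s)\arg(s+1)},
\]
and the second factor is bounded because $|\Im\ell(s)|\ll|t|$ while $\arg(s+1)=\pm\frac{\pi}{2}+O(1/|t|)$. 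The product is therefore $\asymp|t|^{-\Re\ell(s)}$, up to a bounded factor $e^{O(1)}$ coming from the $\pm\frac{\pi}{2}$ term, which is harmless.

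\emph{Applying Phragmén--Lindelöf.} Because of the pole at $s=1$ we work in the half-strip $t\ge1$ (and symmetrically $t\le-1$). On the horizontal segment $t=1$, the function $\zeta$ is bounded for $\frac12\le\sigma\le1+\epsilon$, since $s$ stays away from $1$. Moreover, $\zeta$ has finite order in the strip, since $\zeta(s)\ll|t|$ there by the standard Euler--Maclaurin estimate. Hence $f$ is bounded on the whole boundary of the half-strip and of finite order inside it, so Phragmén--Lindelöf gives $|f|\ll_\epsilon1$ throughout. Unwinding, for $\frac12\le\sigma\le1$,
\[
|\zeta(\sigma+it)|\ll|t|^{\Re\ell(s)}=|t|^{2(\lambda+\epsilon)(1+\epsilon-\sigma)/(1+2\epsilon)}\ll|t|^{2\lambda(1-\sigma)+O(\epsilon)}.
\]
Renaming $\epsilon$ gives the claimed bound.

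\emph{Weyl case and the main obstacle.} Taking $\lambda=\frac16$ requires only the Weyl--Hardy--Littlewood bound $\zeta(\frac12+it)\ll|t|^{1/6}\log|t|$, which is quoted rather than proved. The only delicate point is the bookkeeping of the branch of $(s+1)^{-\ell(s)}$ and of the finite-order growth. Both are routine, so I expect no real obstacle.
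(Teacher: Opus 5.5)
Your strategy is the same as the paper's: Phragm\'en--Lindel\"of convexity between $\sigma=\frac12$ and the right edge. The paper keeps that edge at $\sigma=1$ using $\zeta(1+it)\ll\log(2+|t|)$; your move to $\sigma=1+\epsilon$ is a harmless variant. However, the one step you flag as delicate, the modulus of $(s+1)^{-\ell(s)}$, is computed incorrectly, and as written the argument yields nothing. Because $\ell$ is linear in $s$ with nonzero slope, $\Im\ell(s)=-\kappa t$ with $\kappa=2(\lambda+\epsilon)/(1+2\epsilon)>0$. Also $\arg(s+1)=\pm\frac{\pi}{2}+O(1/|t|)$. Hence
\[
\Im\ell(s)\arg(s+1)=-\frac{\pi\kappa}{2}\,|t|+O(1).
\]
So the ``$\pm\frac{\pi}{2}$ term'' has size $|t|$, not $O(1)$; only the $O(1/|t|)$ remainder is bounded. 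Consequently $|(s+1)^{-\ell(s)}|\asymp|t|^{-\Re\ell(s)}e^{-\pi\kappa|t|/2}$. Your $f$ is indeed bounded on the boundary and Phragm\'en--Lindel\"of does give $|f|\ll1$. But unwinding this yields only
\[
|\zeta(\sigma+it)|\ll|t|^{\Re\ell(s)}e^{\pi\kappa|t|/2},
\]
which is worthless. The problem persists even when $\lambda=0$, since then $\kappa=2\epsilon/(1+2\epsilon)>0$.

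The missing idea is the standard normalization: the base of the power must have argument $O(1/|t|)$, not $\pm\frac{\pi}{2}+O(1/|t|)$. In the half-strip $t\ge1$, take $f(s)=\zeta(s)\,(-i(s+1))^{-\ell(s)}$; this is the same as multiplying your $f$ by $e^{i\pi\ell(s)/2}$. For $t\le-1$, use $(i(s+1))^{-\ell(s)}$, or simply invoke $\zeta(\bar s)=\overline{\zeta(s)}$. Now $\Re(-i(s+1))=t>0$, so the principal branch is holomorphic. Moreover $\arg(-i(s+1))=-\arctan\bigl((\sigma+1)/t\bigr)=O(1/t)$, so $e^{\Im\ell(s)\arg(-i(s+1))}=e^{O(1)}$ genuinely holds and $|(-i(s+1))^{-\ell(s)}|\asymp|t|^{-\Re\ell(s)}$. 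With this change, the rest of your argument goes through unchanged: boundedness on $\sigma=\frac12$, $\sigma=1+\epsilon$ and $t=1$, finite order, the unwinding, and renaming $\epsilon$. Alternatively, do as the paper does and cite the convexity form of the Phragm\'en--Lindel\"of theorem for a strip, whose proof already contains this normalization.
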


\begin{proof}
The choice $\lambda=\frac16$ follows from the classical Weyl bound
\cite{WeylB,LittlewoodB}. On the line $\Re(s)=1$, we have
$\zeta(1+it)\ll\log(2+|t|)$ for $|t|\geq1$. The stated estimate
then follows from the Phragm\'en--Lindel\"of principle applied in the strip $\frac12\leq\Re(s)\leq1$.
\end{proof}
Finally, since our sums are indexed over several variables at once, we will need a multidimensional version of the truncated Perron formula to express $D_{\mathcal{I},\mathcal{J}}(X,1)$ as a multiple contour integral.
\begin{lemma}\label{lem:truncated-perron}
Let $f:\mathbb{N}^k\rightarrow \mathbb{C}$ be an arithmetic function in $k$-variables, and let 
$(\sigma_{a_1},\dots,\sigma_{a_k})$ be a $k$-tuple of abscissas of absolute convergence of the associated Dirichlet
series \[F(w_1,\dots,w_k)=\sum_{l_1,\dots,l_k=1}^{\infty}\frac{f(l_1,\dots,l_k)}{l_1^{w_1}\cdots l_k^{w_k}}.\] 
We have for non-integral values $x_1,\dots,x_k\geq1$, $c_1 > \max(0, \sigma_{a_1} ),\dots, c_k > \max(0, \sigma_{a_k} )$, and $T_1,\ldots,T_k\geq1$, 
\begin{align*}
   &\left| \sum_{l_1\leq x_1,\dots,l_k\leq x_k}f(l_1,\dots,l_k)-\frac{1}{(2\pi i)^k}\int_{c_1-iT_1}^{c_1+iT_1}\dots\int_{c_k-iT_k}^{c_k+iT_k}F(w_1,\dots,w_k)\frac{x_{1}^{w_1}\cdots x_{k}^{w_k}}{w_1\cdots w_k}dw_k\cdots dw_1\right|\\&\leq x_1^{c_1}\cdots x_k^{c_k}\sum_{l_1,\dots,l_k=1}^{\infty}\frac{|f(l_1,\dots,l_k)|}{l_1^{c_1}\cdots l_k^{c_k}}Q\left(\frac{1}{\max(\pi T_1|\log(x_1/l_1)|,1)},\dots,\frac{1}{\max(\pi T_k|\log(x_k/l_k)|,1)}\right),
\end{align*}
where
\begin{equation*}
    Q(X_1,\dots,X_k)=(X_1+1)\cdots(X_k+1)-1.
\end{equation*}
\end{lemma}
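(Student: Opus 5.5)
The plan is to reduce the lemma to the classical one-dimensional truncated Perron estimate, apply it coordinatewise, and then control the resulting product by an elementary inequality, which is exactly where the polynomial $Q$ comes from.

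\textbf{One-variable input.} For $y>0$ with $y\neq1$, $c>0$ and $T\geq1$, let $H(y)=1$ if $y>1$ and $H(y)=0$ if $y<1$. Set
\[
I(y;c,T)=\frac{1}{2\pi i}\int_{c-iT}^{c+iT}\frac{y^w}{w}\,dw .
\]
The classical estimate (Davenport, Ch.~17, or Titchmarsh) is
\[
|I(y;c,T)-H(y)|\leq y^{c}\min\Bigl(1,\frac{1}{\pi T|\log y|}\Bigr)=\frac{y^c}{\max(\pi T|\log y|,1)}.
\]
I would recall its proof only briefly. For the bound $y^c/(\pi T|\log y|)$, complete the segment to a rectangle extending to the left if $y>1$ (picking up the residue $1$ at $w=0$) or to the right if $y<1$, and let its far side go to infinity. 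For the bound $y^c$, replace the rectangle by the arc of the circle $|w|=|c+iT|$ on the appropriate side. The hypothesis that $x_j$ is non-integral guarantees $y_j=x_j/l_j\neq1$ for every $l_j$.

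\textbf{Factorization.} On the product of the vertical segments we have $\Re(w_j)=c_j>\sigma_{a_j}$, so the series for $F$ converges absolutely and uniformly there. Since the segments are compact, we may interchange summation and integration:
\[
\frac{1}{(2\pi i)^k}\int\cdots\int F(w)\prod_j\frac{x_j^{w_j}}{w_j}\,dw=\sum_{l_1,\dots,l_k}f(l_1,\dots,l_k)\prod_{j=1}^k I(x_j/l_j;c_j,T_j).
\]
The truncated sum equals $\sum_l f(l)\prod_j H(x_j/l_j)$. The difference is therefore bounded by
\[
\sum_l |f(l)|\,\Bigl|\prod_j (H_j+E_j)-\prod_j H_j\Bigr|,
\]
where $H_j=H(x_j/l_j)$ and $E_j=I(x_j/l_j;c_j,T_j)-H_j$.

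\textbf{Product inequality.} Put $A_j=(x_j/l_j)^{c_j}$ and $e_j=1/\max(\pi T_j|\log(x_j/l_j)|,1)$. Then $|E_j|\leq A_je_j$ by the one-variable estimate. Also $0\leq H_j\leq A_j$, since $H_j=1$ forces $x_j/l_j>1$ and $c_j>0$. Expanding the product over nonempty subsets $S\subseteq\{1,\dots,k\}$ gives
\[
\Bigl|\prod_j(H_j+E_j)-\prod_jH_j\Bigr|\leq\sum_{S\neq\emptyset}\prod_{j\in S}A_je_j\prod_{j\notin S}A_j=\prod_jA_j\Bigl(\prod_j(1+e_j)-1\Bigr)=\prod_jA_j\,Q(e_1,\dots,e_k).
\]
Since $\prod_jA_j=x_1^{c_1}\cdots x_k^{c_k}/(l_1^{c_1}\cdots l_k^{c_k})$, summing over $l$ yields exactly the stated bound.

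\textbf{Main obstacle.} The only step with real content is the sharp one-variable estimate with the explicit constant $1/\pi$ and the $\min(1,\cdot)$ form; everything else is bookkeeping. I would cite it from Davenport rather than reprove it in detail.
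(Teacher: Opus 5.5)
The paper gives no argument for this lemma; it only cites Proposition~5 of Balazard et al. Your proposal is a correct, self-contained proof. You interchange sum and integral, which is legitimate because absolute convergence at $(c_1,\dots,c_k)$ gives a uniform majorant on the product of segments. You then factor the multiple integral into the one-variable integrals $I(x_j/l_j;c_j,T_j)$ and expand $\prod_j(H_j+E_j)-\prod_jH_j$ over nonempty subsets. The bounds $|E_j|\le A_je_j$ and $0\le H_j\le A_j$ (the latter using $c_j>0$) give exactly $\prod_jA_j\,Q(e_1,\dots,e_k)$, which also explains the shape of $Q$.

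Compared with the bare citation, your version makes clear where each hypothesis enters:
\begin{itemize}
\item non-integrality of $x_j$ serves only to exclude $x_j/l_j=1$;
\item $c_j>0$ serves only to give $H_j\le A_j$;
\item from the abscissa hypothesis you use only absolute convergence at the point $(c_1,\dots,c_k)$.
\end{itemize}
That last reading is the right one in several variables, where the domain of absolute convergence need not be a product of half-planes.

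One caution about the one-variable input. Davenport (Ch.~17) states the estimate as $y^c\min(1,T^{-1}|\log y|^{-1})$, without the factor $1/\pi$. That weaker form would only give the lemma with $T_j$ in place of $\pi T_j$, since it enlarges each $e_j$ and $Q$ is increasing. Your own sketch does give the sharp constant:
\begin{itemize}
\item each of the two horizontal sides contributes at most $\frac{1}{2\pi T}\int y^{\sigma}\,d\sigma=\frac{y^c}{2\pi T|\log y|}$, with the integral over the half-line on the appropriate side of $c$, so together they give $\frac{y^c}{\pi T|\log y|}$;
\item the circular-arc argument gives the bound $y^c$.
\end{itemize}
Write that computation out rather than citing Davenport for the $1/\pi$ version.
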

\begin{proof} 
See \cite[Proposition 5, p. 251--252]{Balazard-et-al}.
\end{proof}

\section{The Full Main Term for $D_{\mathcal{I},\mathcal{J}}(X,1)$: Proof of Theorem~\ref{maintheorem}}
\label{sec:main-proof}

Throughout this section, let $\mathcal{I}=\{\alpha_1,\alpha_2,\alpha_3\}$ and
$\mathcal{J}=\{\beta_1,\beta_2,\beta_3\}$, where
\[\alpha_i\neq\alpha_j,\qquad \beta_i\neq\beta_j
\qquad (i\neq j),\]
and
\[0\neq |\alpha_i|,|\beta_i|\leq\delta,
\qquad 0<\delta<\frac{1}{44}.
\]
We assume Conjecture~\ref{conjecture} with $\Delta<\frac{61}{66}$ throughout.
\subsection{Initial decomposition}
\label{subsec:initial}

We begin by applying Lemma~\ref{general-Hooley} to the factor
$\tau_{\mathcal{I}}(n)$ in $D_{\mathcal{I},\mathcal{J}}(X,1)$.
After interchanging the order of summation, we obtain
\begin{equation}\label{eq:initial-decomposition}
D_{\mathcal{I},\mathcal{J}}(X,1)
=
\Sigma_{111}(X)+\Sigma_{112}(X)+\Sigma_{113}(X)
-\Sigma_{211}(X)-\Sigma_{212}(X)-\Sigma_{213}(X)
+\Sigma_{31}(X),
\end{equation}
where
\begin{align*}
\Sigma_{111}(X)&=
\sum_{l_1 \leq X^{1/3}}
\sum_{l_2 \leq X^{2/3}/l_1}
\sum_{l_3 \leq X/(l_1l_2)}
\tau_{\mathcal{J}}(l_1l_2l_3+1)
l_1^{-\alpha_1}l_2^{-\alpha_2}l_3^{-\alpha_3},\\
\Sigma_{112}(X)
&=\sum_{l_2 \leq X^{1/3}}\sum_{l_3 \leq X^{2/3}/l_2}
\sum_{l_1 \leq X/(l_2l_3)}\tau_{\mathcal{J}}(l_1l_2l_3+1)
l_1^{-\alpha_1}l_2^{-\alpha_2}l_3^{-\alpha_3},\\
\Sigma_{113}(X)
&=
\sum_{l_3 \leq X^{1/3}}
\sum_{l_1 \leq X^{2/3}/l_3}
\sum_{l_2 \leq X/(l_1l_3)}
\tau_{\mathcal{J}}(l_1l_2l_3+1)
l_1^{-\alpha_1}l_2^{-\alpha_2}l_3^{-\alpha_3},
\end{align*}
and
\begin{align*}
\Sigma_{211}(X)
&=\sum_{l_1 \leq X^{1/3}}
\sum_{l_2 \leq X^{2/3}/l_1}
\sum_{l_3 \leq X^{1/3}}
\tau_{\mathcal{J}}(l_1l_2l_3+1)
l_1^{-\alpha_1}l_2^{-\alpha_2}l_3^{-\alpha_3},\\
\Sigma_{212}(X)
&=\sum_{l_2 \leq X^{1/3}}
\sum_{l_3 \leq X^{2/3}/l_2}
\sum_{l_1 \leq X^{1/3}}
\tau_{\mathcal{J}}(l_1l_2l_3+1)
l_1^{-\alpha_1}l_2^{-\alpha_2}l_3^{-\alpha_3},\\
\Sigma_{213}(X)
&=\sum_{l_3 \leq X^{1/3}}
\sum_{l_1 \leq X^{2/3}/l_3}
\sum_{l_2 \leq X^{1/3}}
\tau_{\mathcal{J}}(l_1l_2l_3+1)
l_1^{-\alpha_1}l_2^{-\alpha_2}l_3^{-\alpha_3}.
\end{align*}
Finally,
\begin{equation*}
\Sigma_{31}(X)=\sum_{l_1 \leq X^{1/3}}\sum_{l_2 \leq X^{1/3}}
\sum_{l_3 \leq X^{1/3}}\tau_{\mathcal{J}}(l_1l_2l_3+1)
l_1^{-\alpha_1}l_2^{-\alpha_2}l_3^{-\alpha_3}.
\end{equation*}
For convenience, we also write
\[
\Sigma_{11}
=\Sigma_{111}+\Sigma_{112}+\Sigma_{113},
\qquad\Sigma_{21}=
\Sigma_{211}+\Sigma_{212}+\Sigma_{213}.
\]

\subsection{Calculation for $\Sigma_{11}(X)-\Sigma_{21}(X)$}
\label{subsec:sigma11}

We treat the three differences
\[\Sigma_{111}(X)-\Sigma_{211}(X),\qquad
\Sigma_{112}(X)-\Sigma_{212}(X),\qquad
\Sigma_{113}(X)-\Sigma_{213}(X).
\]
By symmetry, it suffices to evaluate
$\Sigma_{111}(X)-\Sigma_{211}(X)$ in detail. We have
\begin{align}\label{eq:Sigma111-211}
\Sigma_{111}(X)-\Sigma_{211}(X)
&=
\sum_{l_1\leq X^{1/3}}
\sum_{l_2\leq X^{2/3}/l_1}
l_1^{-\alpha_1}l_2^{-\alpha_2}
\sum_{X^{1/3}<l_3\leq X/(l_1l_2)}
\tau_{\mathcal J}(l_1l_2l_3+1)l_3^{-\alpha_3}.
\end{align}
Put $q=l_1l_2$ and
\[
A_q(t)=\sum_{l_3\leq t}\tau_{\mathcal J}(ql_3+1).
\]
Applying Abel's summation to the innermost sum in
\eqref{eq:Sigma111-211}, we obtain
\begin{align}\label{eq:Abel-paired}
\sum_{X^{1/3}<l_3\leq X/q}
\tau_{\mathcal J}(ql_3+1)l_3^{-\alpha_3}
&=\left(\frac{X}{q}\right)^{-\alpha_3}
A_q\left(\frac{X}{q}\right)
-X^{-\alpha_3/3}A_q(X^{1/3})+
\alpha_3\int_{X^{1/3}}^{X/q}
A_q(t)t^{-\alpha_3-1}\,dt.
\end{align}
After the substitution $n=ql_3+1$, we have
\begin{equation}\label{eq:Aq-progression}
A_q(t)
=
\sum_{\substack{n\leq qt+1\\n\equiv1\bmod q}}
\tau_{\mathcal J}(n)-1.
\end{equation}
The contribution of the term $-1$ in \eqref{eq:Aq-progression} to
\eqref{eq:Abel-paired} is
\[-\left(\frac{X}{q}\right)^{-\alpha_3}
+X^{-\alpha_3/3}
-\alpha_3\int_{X^{1/3}}^{X/q}t^{-\alpha_3-1}\,dt=0.
\]
Hence, in the remainder of the calculation, we may replace $A_q(t)$ by
\[\sum_{\substack{n\leq qt+1\\n\equiv1\bmod q}}
\tau_{\mathcal J}(n).
\]
Applying Conjecture~\ref{conjecture} with $X$ replaced by $X+1$, we obtain 
\begin{equation}\label{eq:conjecture}
\sum_{\substack{n\leq Y\\ n\equiv1\bmod q}}
\tau_{\mathcal J}(n)
=\frac{1}{\varphi(q)}
\sum_{\substack{n\leq Y\\ (n,q)=1}}
\tau_{\mathcal J}(n)
+E(Y;1,q),
\end{equation}
where
\begin{equation}\label{eq:maximal-conjecture}
\sum_{q\leq X^{2/3}}
\max_{1\leq Y\leq X+1}
|E(Y;1,q)|
\ll X^{\Delta+\epsilon}.
\end{equation}
This applies to the present range $q\leq X^{2/3}$ since $X^{2/3}<(X+1)^{2/3}$. Since $X^{1/3}\leq t\leq X/q$, we have
\[
qt+1\leq X+1.
\]
Hence, \eqref{eq:maximal-conjecture} applies with $Y=qt+1$
throughout the integral in \eqref{eq:Abel-paired}. Applying the truncated Perron formula together with Lemma~\ref{lemma:Dirichlet}, we have
\begin{align}\label{eq:Perron}
\frac{1}{\varphi(q)}
\sum_{\substack{n\leq Y\\(n,q)=1}}\tau_{\mathcal J}(n)
&=\frac{1}{\varphi(q)}
\frac{1}{2\pi i}
\int_{1+\delta+\epsilon-iT}^{1+\delta+\epsilon+iT}
\frac{Y^s}{s}F(s)\,ds
+\mathcal O\left(
\frac{Y^{1+\delta+\epsilon}}{\varphi(q)T}
\right)\nonumber\\
&=\frac{1}{\varphi(q)}
\operatorname*{Res}_{\substack{s=1-\beta_j\\j=1,2,3}}
\frac{Y^s}{s}F(s)
-L_1-L_2-L_3
+\mathcal O\left(
\frac{Y^{1+\delta+\epsilon}}{\varphi(q)T}
\right),
\end{align}
where
\[
F(s)=\zeta(s+\beta_1)\zeta(s+\beta_2)\zeta(s+\beta_3)f_q(s)
\]
and
\begin{equation}\label{eq:fq}
f_q(s)
=\prod_{p\mid q}
\left(1-p^{-(s+\beta_1)}\right)
\left(1-p^{-(s+\beta_2)}\right)
\left(1-p^{-(s+\beta_3)}\right).
\end{equation}
The three remaining contour integrals are
\begin{align*}
L_1&=\frac{1}{\varphi(q)}
\frac{1}{2\pi i}\int_{1+\delta+\epsilon+iT}^{1/2+\delta+iT}
\frac{Y^s}{s}F(s)\,ds,\\
L_2&=\frac{1}{\varphi(q)}
\frac{1}{2\pi i}
\int_{1/2+\delta+iT}^{1/2+\delta-iT}
\frac{Y^s}{s}F(s)\,ds,\\
L_3&=\frac{1}{\varphi(q)}
\frac{1}{2\pi i}
\int_{1/2+\delta-iT}^{1+\delta+\epsilon-iT}
\frac{Y^s}{s}F(s)\,ds.
\end{align*}
For $L_1$, Lemma~\ref{lem:weyl-bound}, with $\lambda=1/6$, gives
\begin{align*}
L_1
&\ll
\frac{q^\epsilon}{\varphi(q)}
\int_{1/2+\delta}^{1+\delta+\epsilon}
\frac{Y^\sigma}{T}
T^{\max\{1-\sigma+\delta,0\}+\epsilon}\,d\sigma\\
&\ll
\frac{q^\epsilon Y^{1+\delta+\epsilon}}
{\varphi(q)T}.
\end{align*}
The same estimate holds for $L_3$. Also,
\begin{align*}
L_2
&\ll
\frac{q^\epsilon Y^{1/2+\delta}}{\varphi(q)}
\int_{-T}^T
\frac{(1+|t|)^{1/2+\epsilon}}
{\sqrt{(1/2+\delta)^2+t^2}}\,dt\\
&\ll
\frac{q^\epsilon Y^{1/2+\delta}T^{1/2+\epsilon}}
{\varphi(q)}.
\end{align*}
Choosing $T=Y^{1/3}$, we obtain
\begin{equation}\label{eq:perron-error-corrected}
L_1+L_2+L_3
+
\mathcal O\left(
\frac{Y^{1+\delta+\epsilon}}{\varphi(q)T}
\right)
\ll
\frac{q^\epsilon Y^{2/3+\delta+\epsilon}}
{\varphi(q)}.
\end{equation}
Combining \eqref{eq:conjecture}, \eqref{eq:Perron}, and
\eqref{eq:perron-error-corrected}, we obtain
\begin{align}\label{eq:main-asymptotic}
\sum_{\substack{n\leq Y\\n\equiv1\bmod q}}
\tau_{\mathcal J}(n)
&=
\frac{1}{\varphi(q)}
\operatorname*{Res}_{\substack{s=1-\beta_j\\j=1,2,3}}
\frac{Y^s}{s}
\zeta(s+\beta_1)\zeta(s+\beta_2)\zeta(s+\beta_3)f_q(s)
\nonumber\\
&\quad+
\mathcal O\left(
\frac{q^\epsilon Y^{2/3+\delta+\epsilon}}
{\varphi(q)}
\right)
+E(Y;1,q).
\end{align}
Let
\[
\mathcal E_q=\max_{Y\leq X+1}|E(Y;1,q)|.
\]
Using \eqref{eq:maximal-conjecture}, the contribution of $E(Y;1,q)$
to \eqref{eq:Sigma111-211} is
\begin{equation*}
\ll X^{\delta+\epsilon}
\sum_{q\leq X^{2/3}}\mathcal E_q
\ll X^{\Delta+\delta+\epsilon}.
\end{equation*}
Similarly, the error term in \eqref{eq:main-asymptotic} contributes
\begin{equation*}
\ll X^{2/3+2\delta+\epsilon}.
\end{equation*}
We now evaluate the residues at $s=1-\beta_j$ in
\eqref{eq:main-asymptotic}.  We first use the binomial
expansion
\[(qt+1)^{1-\beta_j}=(qt)^{1-\beta_j}\left(
1+\sum_{n=1}^{\infty}\binom{1-\beta_j}{n}(qt)^{-n}
\right).\]
Since $qt\geq X^{1/3}$ for $X^{1/3}\leq t\leq X/q$, the terms with
$n\geq 1$ give
\begin{equation}\label{eq:binomial-error}
(qt+1)^{1-\beta_j}
=q^{1-\beta_j}t^{1-\beta_j}+\mathcal O\left((qt)^\delta\right),
\end{equation}
uniformly for $|\beta_j|\leq\delta$. We now estimate the error arising from replacing
$(qt+1)^{1-\beta_j}$ by $q^{1-\beta_j}t^{1-\beta_j}$ in
\eqref{eq:binomial-error}. Fix $l_1,l_2$ and put $q=l_1l_2$. The error
$\mathcal{O}((qt)^\delta)$ enters each occurrence of $A_q(t)$ in
\eqref{eq:Abel-paired}. At the upper endpoint $t=X/q$, its
contribution is
\[
\ll
\frac{q^{\delta+\epsilon}}{\varphi(q)}
\left(\frac{X}{q}\right)^{2\delta},
\]
while at the lower endpoint $t=X^{1/3}$ it is
\[
\ll
\frac{q^{\delta+\epsilon}}{\varphi(q)}
X^{2\delta/3}.
\]
The contribution from the integral in \eqref{eq:Abel-paired} is
\[
\ll
\frac{q^{\delta+\epsilon}}{\varphi(q)}
|\alpha_3|
\int_{X^{1/3}}^{X/q} t^{2\delta-1}\,dt
\ll
\frac{q^{\delta+\epsilon}}{\varphi(q)}
\left(
\left(\frac{X}{q}\right)^{2\delta}
+
X^{2\delta/3}
\right).
\]
Hence, for fixed $l_1,l_2$, the total error contributed to the
right-hand side of \eqref{eq:Abel-paired} is
\[
\ll
q^{-1+\delta+\epsilon}
\left(
\left(\frac{X}{q}\right)^{2\delta}
+
X^{2\delta/3}
\right),
\]
where we used $\varphi(q)^{-1}\ll q^{-1+\epsilon}$. We now insert this estimate into \eqref{eq:Sigma111-211}.
Since $q=l_1l_2$ and
\[
|l_1^{-\alpha_1}l_2^{-\alpha_2}|
\leq
l_1^\delta l_2^\delta
=
q^\delta,
\]
the total contribution of the error in
\eqref{eq:binomial-error} to
$\Sigma_{111}(X)-\Sigma_{211}(X)$ is
\begin{align*}
&\ll
X^{2\delta+\epsilon}
\sum_{q\leq X^{2/3}}\frac{d(q)}{q}
+
X^{2\delta/3+\epsilon}
\sum_{q\leq X^{2/3}}
\frac{d(q)}{q^{1-2\delta}}
\\&\ll
X^{2\delta+\epsilon}
+
X^{2\delta/3+\epsilon}X^{4\delta/3+\epsilon}
\ll X^{2\delta+\epsilon}.
\end{align*}
Thus, the error term in \eqref{eq:binomial-error} contributes
$\mathcal{O}(X^{2\delta+\epsilon})$ to
$\Sigma_{111}(X)-\Sigma_{211}(X)$.

For fixed $l_1,l_2$, the two endpoint terms in
\eqref{eq:Abel-paired} give
\[
\frac{\displaystyle\prod_{\substack{i=1\\i\neq j}}^3
\zeta(1-\beta_j+\beta_i)}
{\varphi(q)}
\frac{f_q(1-\beta_j)q^{1-\beta_j}}
{1-\beta_j}\left(\left(\frac{X}{q}\right)^{1-\beta_j-\alpha_3}
-X^{(1-\beta_j-\alpha_3)/3}
\right),
\]
while the integral in \eqref{eq:Abel-paired} gives
\begin{align*}
&\frac{\alpha_3}{\varphi(q)}
\frac{\displaystyle\prod_{\substack{i=1\\i\neq j}}^3
\zeta(1-\beta_j+\beta_i)}
{1-\beta_j}
f_q(1-\beta_j)q^{1-\beta_j}
\int_{X^{1/3}}^{X/q}t^{-\beta_j-\alpha_3}\,dt\\
&\qquad=\frac{\alpha_3}{\varphi(q)}
\frac{\displaystyle\prod_{\substack{i=1\\i\neq j}}^3
\zeta(1-\beta_j+\beta_i)}
{(1-\beta_j)(1-\beta_j-\alpha_3)}
f_q(1-\beta_j)q^{1-\beta_j}
\left(\left(\frac{X}{q}\right)^{1-\beta_j-\alpha_3}
-X^{(1-\beta_j-\alpha_3)/3}
\right).
\end{align*}
Since
\[
\frac{1}{1-\beta_j}+\frac{\alpha_3}
{(1-\beta_j)(1-\beta_j-\alpha_3)}
=\frac{1}{1-\beta_j-\alpha_3},
\]
the contribution of the residue at $s=1-\beta_j$ to
\eqref{eq:Abel-paired} is
\begin{align*}
&\frac{\displaystyle\prod_{\substack{i=1\\i\neq j}}^3
\zeta(1-\beta_j+\beta_i)}
{\varphi(q)(1-\beta_j-\alpha_3)}
f_q(1-\beta_j)q^{1-\beta_j}\left(\left(\frac{X}{q}\right)^{1-\beta_j-\alpha_3}
-X^{(1-\beta_j-\alpha_3)/3}
\right).
\end{align*}
Summing over $l_1,l_2$, we obtain
\begin{align}\label{eq:Sigma111-211-star}
\Sigma_{111}(X)-\Sigma_{211}(X)
&=\sum_{j=1}^3
\frac{\displaystyle\prod_{\substack{i=1\\i\neq j}}^3
\zeta(1-\beta_j+\beta_i)}
{1-\beta_j-\alpha_3}
\left(X^{1-\beta_j-\alpha_3}\Sigma_j^\star(X)
-X^{(1-\beta_j-\alpha_3)/3}\Sigma_j^{\star\star}(X)
\right)\nonumber\\&\quad+\mathcal O\left(X^{\Delta+\delta+\epsilon}+X^{2/3+2\delta+\epsilon}
\right),
\end{align}
where
\begin{align*}
\Sigma_j^\star(X)
&:=
\sum_{l_1\leq X^{1/3}}l_1^{\alpha_3-\alpha_1}
\sum_{l_2\leq X^{2/3}/l_1}l_2^{\alpha_3-\alpha_2}
\frac{f_{l_1l_2}(1-\beta_j)}{\varphi(l_1l_2)},\\
\Sigma_j^{\star\star}(X)
&:=
\sum_{l_1\leq X^{1/3}}l_1^{1-\beta_j-\alpha_1}
\sum_{l_2\leq X^{2/3}/l_1}l_2^{1-\beta_j-\alpha_2}
\frac{f_{l_1l_2}(1-\beta_j)}{\varphi(l_1l_2)}.
\end{align*}

\subsubsection*{Treatment of $\Sigma_j^\star$.}

We consider the Dirichlet series
\[
\sum_{l_1,l_2=1}^\infty
\frac{f_{l_1l_2}(1-\beta_j)}{\varphi(l_1l_2)}
\frac{1}{l_1^{w_1+w_2+\alpha_1-\alpha_3}}
\frac{1}{l_2^{w_2+\alpha_2-\alpha_3}}.
\]
By using multiplicativity and Euler's product formula, we have
\begin{equation*}
\begin{split}
&\sum_{l_1,l_2=1}^\infty
\frac{f_{l_1l_2}(1-\beta_j)}{\varphi(l_1l_2)}
\frac{1}{l_1^{w_1+w_2+\alpha_1-\alpha_3}}
\frac{1}{l_2^{w_2+\alpha_2-\alpha_3}}\\
&\qquad=
\prod_p
\sum_{j_1,j_2}
\frac{f_{p^{j_1+j_2}}(1-\beta_j)}
{\varphi(p^{j_1+j_2})}
\frac{1}
{p^{j_1(w_1+w_2+\alpha_1-\alpha_3)
+j_2(w_2+\alpha_2-\alpha_3)}}.
\end{split}
\end{equation*}
By \eqref{eq:fq} and the definition of $\varphi(n)$, we have
\begin{align*}
\begin{split}
&\sum_{j_1,j_2}
\frac{f_{p^{j_1+j_2}}(1-\beta_j)}
{\varphi(p^{j_1+j_2})}
\frac{1}
{p^{j_1(w_1+w_2+\alpha_1-\alpha_3)
+j_2(w_2+\alpha_2-\alpha_3)}}\\
&\qquad=1+\prod_{\substack{i=1\\i\neq j}}^3
\left(1-\frac{1}{p^{1-\beta_j+\beta_i}}\right)
\left(
\frac{1}{p^{1+w_1+w_2+\alpha_1-\alpha_3}-1}
+\frac{1}{p^{1+w_2+\alpha_2-\alpha_3}-1}\right.\\
&\hspace{14em}\left.
+\frac{1}{(p^{1+w_1+w_2+\alpha_1-\alpha_3}-1)
(p^{1+w_2+\alpha_2-\alpha_3}-1)}
\right).
\end{split}
\end{align*}
Applying the multidimensional truncated Perron formula in
Lemma~\ref{lem:truncated-perron}, we obtain
\begin{align}\label{eq:Perron1}
\Sigma_j^\star(X)
&=
\frac{1}{(2\pi i)^2}
\int_{2\delta+\epsilon-iT_1}^{2\delta+\epsilon+iT_1}
\int_{2\delta+\epsilon-iT_2}^{2\delta+\epsilon+iT_2}
\frac{X^{w_1/3}}{w_1}
\frac{X^{2w_2/3}}{w_2}
\nonumber\\
&\qquad\times
\zeta(1+w_1+w_2+\alpha_1-\alpha_3)
\zeta(1+w_2+\alpha_2-\alpha_3)
A_j^{(1)}(w_1,w_2)\,dw_2\,dw_1
\nonumber\\
&\quad+
O\left(\frac{X^{2\delta+\epsilon}}{T_1}\right)
+
O\left(\frac{X^{2\delta+\epsilon}}{T_2}\right).
\end{align}
where $A_j^{(1)}(w_1,w_2)$ is the arithmetic factor defined in \eqref{eq:A1-intro}.
\begin{lemma}\label{lem:holomorphic}
The function $A_j^{(1)}(w_1,w_2)$ is analytic in the region
\[
\Re(w_2)>
\Re(\alpha_3)-\Re(\alpha_2)-\frac12
\]
and
\[
\Re(w_1)>
\Re(\alpha_3)-\Re(\alpha_1)-\frac12-\Re(w_2).
\]
\end{lemma}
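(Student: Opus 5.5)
The plan is to show that the Euler product defining $A_j^{(1)}(w_1,w_2)$ in \eqref{eq:A1-intro} converges absolutely and locally uniformly in the stated region. Analyticity then follows, since a locally uniformly convergent product of holomorphic factors, none of which has poles there, is holomorphic. Write
\[
u=p^{-(1+w_1+w_2+\alpha_1-\alpha_3)},\qquad v=p^{-(1+w_2+\alpha_2-\alpha_3)},\qquad c_p=\prod_{i\neq j}\left(1-p^{-(1-\beta_j+\beta_i)}\right).
\]
The two hypotheses say exactly that $\Re(1+w_2+\alpha_2-\alpha_3)>\tfrac12$ and $\Re(1+w_1+w_2+\alpha_1-\alpha_3)>\tfrac12$. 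Hence $|u|,|v|\le p^{-1/2-\eta}$ for some $\eta>0$, uniformly on compact subsets. Since $|\beta_i|\le\delta<1/44$, we also have $c_p=1+\mathcal O(p^{-1+2\delta})$.

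First I rewrite the local factor. Using $\frac{1}{p^{a}-1}=\frac{x}{1-x}$ with $x=p^{-a}$, the bracket becomes
\[
\frac{u}{1-u}+\frac{v}{1-v}+\frac{uv}{(1-u)(1-v)}=\frac{u+v-uv}{(1-u)(1-v)}.
\]
The local factor is therefore
\[
(1-u)(1-v)\left(1+c_p\,\frac{u+v-uv}{(1-u)(1-v)}\right)=(1-u)(1-v)+c_p(u+v-uv).
\]
This is a polynomial in $u,v$, and for $p\ge2$ we have $|u|,|v|<1$, so no poles occur at individual primes. Expanding,
\[
(1-u)(1-v)+c_p(u+v-uv)=1-(1-c_p)(u+v-uv).
\]

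Now $1-c_p=\mathcal O(p^{-1+2\delta})$ and $u+v-uv=\mathcal O(p^{-1/2-\eta})$. Hence each local factor is $1+\mathcal O(p^{-3/2+2\delta-\eta})$, and the product $\sum_p p^{-3/2+2\delta-\eta}$ converges absolutely, locally uniformly in $(w_1,w_2)$. This uses only $2\delta<1/2$, which is comfortably implied by $\delta<1/44$. It follows that $A_j^{(1)}$ is holomorphic in both variables jointly, for example by Hartogs or by uniform limits of holomorphic functions of two variables. Each finite partial product is holomorphic, being a polynomial in $p^{-w_1}$ and $p^{-w_2}$.

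The only delicate point is the cancellation of the leading terms $u+v$: without the factor $1-c_p$, the product would only converge for real parts greater than $0$, not $-\tfrac12$. I would display this algebraic simplification explicitly. I would also note that the result actually holds in a slightly larger region, which is harmless.
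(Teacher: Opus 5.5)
Your proof is correct and takes the same route as the paper: both show that each local factor of the Euler product is $1+O(\cdot)$ with the first-order terms cancelled, which gives absolute, locally uniform convergence in the stated region. Your identity $1-(1-c_p)(u+v-uv)$ is more careful than the paper's one-line estimate. It tracks the $p^{2\delta}$ loss coming from $1-c_p$, which the paper's $p^{-2-\Re(\cdot)}$ omits, and this loss is harmless in the stated region.
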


\begin{proof}
It follows from the definition of $A_j^{(1)}(w_1,w_2)$ that
\[
A_j^{(1)}(w_1,w_2)
=
\prod_p\left(1+\mathcal O\left(
p^{-2-\Re(w_1+w_2+\alpha_1-\alpha_3)}
+
p^{-2-\Re(w_2+\alpha_2-\alpha_3)}
\right)\right).
\]
The Euler product converges absolutely provided that
\[
\Re(w_2+\alpha_2-\alpha_3)>-\frac12
\]
and
\[
\Re(w_1+w_2+\alpha_1-\alpha_3)>-\frac12.
\]
Hence, $A_j^{(1)}(w_1,w_2)$ is analytic in the stated region.
\end{proof}
Fix $w_1$ with $\Re(w_1)=2\delta+\epsilon$. We shift the contour
in the $w_2$ integral in \eqref{eq:Perron1} to the vertical segment from
$\sigma_2-iT_2$ to $\sigma_2+iT_2$, where
\[
-\frac12+2\delta+\epsilon
<
\sigma_2
<
-4\delta-\epsilon.
\]
The poles of the integrand are at
\[
w_2=0,\qquad
w_2=\alpha_3-\alpha_2,\qquad
w_2=\alpha_3-\alpha_1-w_1.
\]
By Cauchy's residue theorem,
\begin{align*}
&\frac{1}{2\pi i}
\int_{2\delta+\epsilon-iT_2}^{2\delta+\epsilon+iT_2}
\frac{X^{2w_2/3}}{w_2}
\zeta(1+w_1+w_2+\alpha_1-\alpha_3)
\zeta(1+w_2+\alpha_2-\alpha_3)
A_j^{(1)}(w_1,w_2)\,dw_2\\
&=
\operatorname{Res}_{w_2=0}
\left(
\frac{X^{2w_2/3}}{w_2}
\zeta(1+w_1+w_2+\alpha_1-\alpha_3)
\zeta(1+w_2+\alpha_2-\alpha_3)
A_j^{(1)}(w_1,w_2)
\right)\\
&\quad+
\operatorname{Res}_{w_2=\alpha_3-\alpha_2}
\left(
\frac{X^{2w_2/3}}{w_2}
\zeta(1+w_1+w_2+\alpha_1-\alpha_3)
\zeta(1+w_2+\alpha_2-\alpha_3)
A_j^{(1)}(w_1,w_2)
\right)\\
&\quad+
\operatorname{Res}_{w_2=\alpha_3-\alpha_1-w_1}
\left(
\frac{X^{2w_2/3}}{w_2}
\zeta(1+w_1+w_2+\alpha_1-\alpha_3)
\zeta(1+w_2+\alpha_2-\alpha_3)
A_j^{(1)}(w_1,w_2)
\right)\\
&\quad-I_1-I_2-I_3,
\end{align*}
where
\[
I_1=
\frac{1}{2\pi i}
\int_{2\delta+\epsilon+iT_2}^{\sigma_2+iT_2}
\frac{X^{2w_2/3}}{w_2}
\zeta(1+w_1+w_2+\alpha_1-\alpha_3)
\zeta(1+w_2+\alpha_2-\alpha_3)
A_j^{(1)}(w_1,w_2)\,dw_2,
\]
\[
I_2=
\frac{1}{2\pi i}
\int_{\sigma_2+iT_2}^{\sigma_2-iT_2}
\frac{X^{2w_2/3}}{w_2}
\zeta(1+w_1+w_2+\alpha_1-\alpha_3)
\zeta(1+w_2+\alpha_2-\alpha_3)
A_j^{(1)}(w_1,w_2)\,dw_2,
\]
and
\[
I_3=
\frac{1}{2\pi i}
\int_{\sigma_2-iT_2}^{2\delta+\epsilon-iT_2}
\frac{X^{2w_2/3}}{w_2}
\zeta(1+w_1+w_2+\alpha_1-\alpha_3)
\zeta(1+w_2+\alpha_2-\alpha_3)
A_j^{(1)}(w_1,w_2)\,dw_2.
\]
By Lemma~\ref{lem:weyl-bound}, we have
\begin{align*}
I_1
&\ll
\int_{\sigma_2}^{2\delta+\epsilon}
\frac{X^{2\sigma/3}}{\sqrt{\sigma^2+T_2^2}}
(1+|\Im(w_1)+T_2|)^{-2\lambda\sigma+4\lambda\delta+\epsilon}
(1+T_2)^{-2\lambda\sigma+4\lambda\delta+\epsilon}
\,d\sigma\\
&\ll
T_2^{2\lambda|\sigma_2|+4\lambda\delta+\epsilon}
(T_1+T_2)^{2\lambda|\sigma_2|+4\lambda\delta+\epsilon}
T_2^{-1}
+
\frac{X^{4\delta/3+\epsilon}}{T_2}.
\end{align*}
Similarly,
\[
I_3
\ll
T_2^{2\lambda|\sigma_2|+4\lambda\delta+\epsilon}
(T_1+T_2)^{2\lambda|\sigma_2|+4\lambda\delta+\epsilon}
T_2^{-1}
+
\frac{X^{4\delta/3+\epsilon}}{T_2}.
\]
For $I_2$, Lemma~\ref{lem:weyl-bound} gives
\begin{align*}
I_2
&\ll
X^{2\sigma_2/3}
\int_{-T_2}^{T_2}
\frac{
(1+|\Im(w_1)+t|)^{2\lambda|\sigma_2|+4\lambda\delta+\epsilon}
(1+|t|)^{2\lambda|\sigma_2|+4\lambda\delta+\epsilon}
}
{\sqrt{\sigma_2^2+t^2}}\,dt\\
&\ll
X^{-2|\sigma_2|/3}
(T_1+T_2)^{2\lambda|\sigma_2|+4\lambda\delta+\epsilon}
T_2^{2\lambda|\sigma_2|+4\lambda\delta+\epsilon}.
\end{align*}
Therefore,
\begin{align}\label{eqn:error-hor-ver}
I_1+I_2+I_3
&\ll T_2^{2\lambda|\sigma_2|+4\lambda\delta+\epsilon}
(T_1+T_2)^{2\lambda|\sigma_2|+4\lambda\delta+\epsilon}
\left(T_2^{-1}+X^{-2|\sigma_2|/3}\right)+\frac{X^{4\delta/3+\epsilon}}{T_2}.
\end{align}
Next,
\begin{align*}
&\operatorname{Res}_{w_2=\alpha_3-\alpha_1-w_1}
\left(
\frac{X^{2w_2/3}}{w_2}
\zeta(1+w_1+w_2+\alpha_1-\alpha_3)
\zeta(1+w_2+\alpha_2-\alpha_3)
A_j^{(1)}(w_1,w_2)
\right)\\
&\quad=
\frac{X^{2(\alpha_3-\alpha_1-w_1)/3}}
{\alpha_3-\alpha_1-w_1}
\zeta(1-w_1+\alpha_2-\alpha_1)
A_j^{(1)}(w_1,\alpha_3-\alpha_1-w_1).
\end{align*}
Its contribution to \eqref{eq:Perron1} is
\begin{align}\label{eq:moving-residue-star}
\frac{1}{2\pi i}
\int_{2\delta+\epsilon-iT_1}^{2\delta+\epsilon+iT_1}
\frac{X^{w_1/3}}{w_1}
\frac{X^{2(\alpha_3-\alpha_1-w_1)/3}}
{\alpha_3-\alpha_1-w_1}
\zeta(1-w_1+\alpha_2-\alpha_1)
A_j^{(1)}(w_1,\alpha_3-\alpha_1-w_1)\,dw_1.
\end{align}
We move the contour in \eqref{eq:moving-residue-star} to
$\Re(w_1)=|\sigma_2|$. The possible poles of the integrand are at
\[
w_1=0,\qquad
w_1=\alpha_3-\alpha_1,\qquad
w_1=\alpha_2-\alpha_1.
\]
Since
\[
\Re(\alpha_3-\alpha_1),\,
\Re(\alpha_2-\alpha_1)\leq 2\delta
\]
and, by our choice of $\sigma_2$,
\[
|\sigma_2|>4\delta+\epsilon>2\delta+\epsilon,
\]
all these poles lie to the left of the initial contour
$\Re(w_1)=2\delta+\epsilon$. Hence no pole is crossed.
It follows that
\begin{align}\label{eqn:error-res-1}
&\frac{1}{2\pi i}
\int_{2\delta+\epsilon-iT_1}^{2\delta+\epsilon+iT_1}
\frac{X^{w_1/3}}{w_1}
\frac{X^{2(\alpha_3-\alpha_1-w_1)/3}}
{\alpha_3-\alpha_1-w_1}
\zeta(1-w_1+\alpha_2-\alpha_1)
A_j^{(1)}(w_1,\alpha_3-\alpha_1-w_1)\,dw_1
\nonumber\\
&\qquad\ll
X^{(4\delta-|\sigma_2|)/3+\epsilon}
+
X^{4\delta/3}
T_1^{-2+2\lambda|\sigma_2|+4\lambda\delta+\epsilon}.
\end{align}
For the residue at $w_2=0$, we obtain
\begin{align}\label{eqn:residue2}
\frac{1}{2\pi i}
\int_{2\delta+\epsilon-iT_1}^{2\delta+\epsilon+iT_1}
\frac{X^{w_1/3}}{w_1}
\zeta(1+w_1+\alpha_1-\alpha_3)
\zeta(1+\alpha_2-\alpha_3)
A_j^{(1)}(w_1,0)\,dw_1.
\end{align}
For the residue at $w_2=\alpha_3-\alpha_2$, we obtain
\begin{align}\label{eqn:residue1}
\frac{1}{2\pi i}
\int_{2\delta+\epsilon-iT_1}^{2\delta+\epsilon+iT_1}
\frac{X^{2(\alpha_3-\alpha_2)/3+w_1/3}}
{w_1(\alpha_3-\alpha_2)}
\zeta(1+w_1+\alpha_1-\alpha_2)
A_j^{(1)}(w_1,\alpha_3-\alpha_2)\,dw_1.
\end{align}
We now consider \eqref{eqn:residue2}. We move its contour to
$\Re(w_1)=-\frac12-\sigma_2+\epsilon$. By Cauchy's residue theorem, we have
\begin{align}\label{eq:residue2-shift}
&\frac{1}{2\pi i}
\int_{2\delta+\epsilon-iT_1}^{2\delta+\epsilon+iT_1}
\frac{X^{w_1/3}}{w_1}
\zeta(1+w_1+\alpha_1-\alpha_3)
\zeta(1+\alpha_2-\alpha_3)
A_j^{(1)}(w_1,0)\,dw_1
\nonumber\\
&=\operatorname{Res}_{w_1=0}
\left(
\frac{X^{w_1/3}}{w_1}
\zeta(1+w_1+\alpha_1-\alpha_3)
\zeta(1+\alpha_2-\alpha_3)
A_j^{(1)}(w_1,0)
\right)\nonumber\\
&\quad+
\operatorname{Res}_{w_1=\alpha_3-\alpha_1}
\left(
\frac{X^{w_1/3}}{w_1}
\zeta(1+w_1+\alpha_1-\alpha_3)
\zeta(1+\alpha_2-\alpha_3)
A_j^{(1)}(w_1,0)
\right)
-I_4-I_5-I_6,
\end{align}
where
\[
I_4=
\frac{1}{2\pi i}
\int_{2\delta+\epsilon+iT_1}^{-1/2-\sigma_2+\epsilon+iT_1}
\frac{X^{w_1/3}}{w_1}
\zeta(1+w_1+\alpha_1-\alpha_3)
\zeta(1+\alpha_2-\alpha_3)
A_j^{(1)}(w_1,0)\,dw_1,
\]
\[
I_5=
\frac{1}{2\pi i}
\int_{-1/2-\sigma_2+\epsilon+iT_1}^{-1/2-\sigma_2+\epsilon-iT_1}
\frac{X^{w_1/3}}{w_1}
\zeta(1+w_1+\alpha_1-\alpha_3)
\zeta(1+\alpha_2-\alpha_3)
A_j^{(1)}(w_1,0)\,dw_1,
\]
and
\[
I_6=
\frac{1}{2\pi i}
\int_{-1/2-\sigma_2+\epsilon-iT_1}^{2\delta+\epsilon-iT_1}
\frac{X^{w_1/3}}{w_1}
\zeta(1+w_1+\alpha_1-\alpha_3)
\zeta(1+\alpha_2-\alpha_3)
A_j^{(1)}(w_1,0)\,dw_1.
\]
By Lemma~\ref{lem:weyl-bound}, we have
\begin{equation}\label{eq:I456}
I_4+I_5+I_6
\ll
X^{2\delta/3+\epsilon}T_1^{-1}
+
X^{(|\sigma_2|-1/2)/3+\epsilon}
T_1^{\lambda-2\lambda|\sigma_2|
+4\lambda\delta+\epsilon}.
\end{equation}
For \eqref{eqn:residue1}, we move the contour to the same line. Thus
\begin{align}\label{eq:residue1-shift}
&\frac{1}{2\pi i}
\int_{2\delta+\epsilon-iT_1}^{2\delta+\epsilon+iT_1}
\frac{X^{2(\alpha_3-\alpha_2)/3+w_1/3}}
{w_1(\alpha_3-\alpha_2)}
\zeta(1+w_1+\alpha_1-\alpha_2)
A_j^{(1)}(w_1,\alpha_3-\alpha_2)\,dw_1
\nonumber\\
&=\operatorname{Res}_{w_1=0}
\left(
\frac{X^{2(\alpha_3-\alpha_2)/3+w_1/3}}
{w_1(\alpha_3-\alpha_2)}
\zeta(1+w_1+\alpha_1-\alpha_2)
A_j^{(1)}(w_1,\alpha_3-\alpha_2)
\right)\nonumber\\
&\quad+
\operatorname{Res}_{w_1=\alpha_2-\alpha_1}
\left(
\frac{X^{2(\alpha_3-\alpha_2)/3+w_1/3}}
{w_1(\alpha_3-\alpha_2)}
\zeta(1+w_1+\alpha_1-\alpha_2)
A_j^{(1)}(w_1,\alpha_3-\alpha_2)
\right)
-I_7-I_8-I_9,
\end{align}
where
\[
I_7=
\frac{1}{2\pi i}
\int_{2\delta+\epsilon+iT_1}^{-1/2-\sigma_2+\epsilon+iT_1}
\frac{X^{2(\alpha_3-\alpha_2)/3+w_1/3}}
{w_1(\alpha_3-\alpha_2)}
\zeta(1+w_1+\alpha_1-\alpha_2)
A_j^{(1)}(w_1,\alpha_3-\alpha_2)\,dw_1,
\]
\[
I_8=
\frac{1}{2\pi i}
\int_{-1/2-\sigma_2+\epsilon+iT_1}^{-1/2-\sigma_2+\epsilon-iT_1}
\frac{X^{2(\alpha_3-\alpha_2)/3+w_1/3}}
{w_1(\alpha_3-\alpha_2)}
\zeta(1+w_1+\alpha_1-\alpha_2)
A_j^{(1)}(w_1,\alpha_3-\alpha_2)\,dw_1,
\]
and
\[
I_9=
\frac{1}{2\pi i}
\int_{-1/2-\sigma_2+\epsilon-iT_1}^{2\delta+\epsilon-iT_1}
\frac{X^{2(\alpha_3-\alpha_2)/3+w_1/3}}
{w_1(\alpha_3-\alpha_2)}
\zeta(1+w_1+\alpha_1-\alpha_2)
A_j^{(1)}(w_1,\alpha_3-\alpha_2)\,dw_1.
\]
Again, by Lemma~\ref{lem:weyl-bound}, we get
\begin{equation}\label{eq:I789}
I_7+I_8+I_9
\ll
X^{2\delta+\epsilon}T_1^{-1}
+
X^{4\delta/3+(|\sigma_2|-1/2)/3+\epsilon}
T_1^{\lambda-2\lambda|\sigma_2|
+4\lambda\delta+\epsilon}.
\end{equation}
Let $E^{(1)}(X,T_1,T_2)$ denote the sum of the two Perron
truncation errors in \eqref{eq:Perron1} and the contributions
estimated in \eqref{eqn:error-hor-ver}, \eqref{eqn:error-res-1},
\eqref{eq:I456}, and \eqref{eq:I789}.
Then
\begin{align*}
E^{(1)}(X,T_1,T_2)
&\ll
X^{2\delta/3+\epsilon}
T_2^{2\lambda|\sigma_2|+4\lambda\delta+\epsilon}
(T_1+T_2)^{2\lambda|\sigma_2|+4\lambda\delta+\epsilon}
\left(
T_2^{-1}+X^{-2|\sigma_2|/3}
\right)
\\
&\quad+
X^{(4\delta-|\sigma_2|)/3+\epsilon}
+
X^{2\delta+\epsilon}
\left(T_1^{-1}+T_2^{-1}\right)
\\
&\quad+
X^{4\delta/3+(|\sigma_2|-1/2)/3+\epsilon}
T_1^{\lambda-2\lambda|\sigma_2|
+4\lambda\delta+\epsilon}
\\
&\quad+
X^{4\delta/3}
T_1^{-2+2\lambda|\sigma_2|
+4\lambda\delta+\epsilon}.
\end{align*}
Taking
\[
\lambda=\frac16,\qquad
|\sigma_2|=\frac5{22},\qquad
T_1=X^{1/11},\qquad
T_2=X^{5/33},
\]
which is compatible with the above range for $\sigma_2$ when
$0<\delta<1/44$ (after decreasing $\epsilon$ if necessary), gives
\begin{equation}\label{eqn:error-1}
E^{(1)}(X,T_1,T_2)
\ll
X^{-5/66+4\delta/3+\epsilon}.
\end{equation}
Consequently,
\begin{align}\label{eqn:M1-1-final}
\Sigma_j^\star(X)
&=
\zeta(1+\alpha_1-\alpha_3)
\zeta(1+\alpha_2-\alpha_3)
A_j^{(1)}(0,0)\nonumber\\
&\quad+
\frac{X^{(\alpha_3-\alpha_1)/3}}
{\alpha_3-\alpha_1}\zeta(1+\alpha_2-\alpha_3)A_j^{(1)}(\alpha_3-\alpha_1,0)\nonumber\\
&\quad+\frac{X^{2(\alpha_3-\alpha_2)/3}}
{\alpha_3-\alpha_2}
\zeta(1+\alpha_1-\alpha_2)
A_j^{(1)}(0,\alpha_3-\alpha_2)\nonumber\\
&\quad+
\frac{X^{(2\alpha_3-\alpha_1-\alpha_2)/3}}
{(\alpha_2-\alpha_1)(\alpha_3-\alpha_2)}
A_j^{(1)}(\alpha_2-\alpha_1,\alpha_3-\alpha_2)\nonumber\\
&\quad+\mathcal O\left(X^{-5/66+4\delta/3+\epsilon}
\right).
\end{align}

\subsubsection*{Treatment of $\Sigma_j^{\star\star}$.}

Applying the multidimensional truncated Perron formula to
$\Sigma_j^{\star\star}(X)$ gives
\begin{align}\label{eq:Perron2}
\Sigma_j^{\star\star}(X)
&=\frac{1}{(2\pi i)^2}
\int_{2\delta+\epsilon-iT_1}^{2\delta+\epsilon+iT_1}
\int_{1+2\delta+\epsilon-iT_2}^{1+2\delta+\epsilon+iT_2}
\frac{X^{w_1/3}}{w_1}
\frac{X^{2w_2/3}}{w_2}
\nonumber\\&\qquad\times
\zeta(w_1+w_2+\beta_j+\alpha_1)
\zeta(w_2+\beta_j+\alpha_2)
A_j^{(2)}(w_1,w_2)\,dw_2\,dw_1
\nonumber\\
&\quad+\mathcal O\left(
\frac{X^{2/3+2\delta+\epsilon}}{T_1}
\right)
+\mathcal O\left(
\frac{X^{2/3+2\delta+\epsilon}}{T_2}
\right),
\end{align}
where $A_j^{(2)}(w_1,w_2)$ is the arithmetic factor defined in \eqref{eq:A2-intro}.

\begin{lemma}\label{lem:holomorphic2}
The function $A_j^{(2)}(w_1,w_2)$ is analytic in the region
\[
\Re(w_2)>
\frac12-\Re(\alpha_2)-\Re(\beta_j)
\]
and
\[
\Re(w_1)>
\frac12-\Re(\alpha_1)-\Re(\beta_j)-\Re(w_2).
\]
\end{lemma}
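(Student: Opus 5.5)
The plan is to reduce Lemma~\ref{lem:holomorphic2} to Lemma~\ref{lem:holomorphic} through the change of variables in the definition \eqref{eq:A2-intro}. Since
\[
A_j^{(2)}(w_1,w_2)=A_j^{(1)}(w_1,w_2-1+\alpha_3+\beta_j),
\]
$A_j^{(2)}$ is the composition of $A_j^{(1)}$ with an affine (hence entire) map of $(w_1,w_2)$. It is therefore analytic wherever $(w_1,w_2')$, with $w_2'=w_2-1+\alpha_3+\beta_j$, lies in the region of Lemma~\ref{lem:holomorphic}.

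First, substitute $w_2'$ into the first condition of Lemma~\ref{lem:holomorphic}:
\[
\Re(w_2)-1+\Re(\alpha_3)+\Re(\beta_j)>\Re(\alpha_3)-\Re(\alpha_2)-\tfrac12 .
\]
This is equivalent to $\Re(w_2)>\tfrac12-\Re(\alpha_2)-\Re(\beta_j)$, the first stated condition.

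Second, substitute into the condition $\Re(w_1)>\Re(\alpha_3)-\Re(\alpha_1)-\tfrac12-\Re(w_2')$:
\[
\Re(w_1)>\Re(\alpha_3)-\Re(\alpha_1)-\tfrac12-\Re(w_2)+1-\Re(\alpha_3)-\Re(\beta_j).
\]
This simplifies to $\Re(w_1)>\tfrac12-\Re(\alpha_1)-\Re(\beta_j)-\Re(w_2)$, the second stated condition.

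For a self-contained argument independent of the change of variables, I would verify directly that each Euler factor of $A_j^{(2)}$ is $1+\mathcal O(p^{-2-\Re(\cdot)})$ in the exponents $w_1+w_2+\alpha_1+\beta_j-1$ and $w_2+\alpha_2+\beta_j-1$. One expands the product of $(1-p^{-u})(1-p^{-v})$ against $1+c_p\bigl((p^u-1)^{-1}+(p^v-1)^{-1}+\dots\bigr)$ with $c_p=1+\mathcal O(p^{-1+2\delta})$, and observes that the first-order terms in $p^{-u},p^{-v}$ cancel to leave terms of order $p^{-u-1}$, $p^{-v-1}$ and $p^{-u-v}$. Absolute convergence then needs $\Re(u),\Re(v)>\tfrac12$ where $u=w_1+w_2+\alpha_1+\beta_j$, $v=w_2+\alpha_2+\beta_j$, which gives the stated region.

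The only delicate point is that cancellation: the surviving cross terms $p^{-u-1}$ and $p^{-v-1}$ come with a factor $p^{-1+\beta_i-\beta_j}$ from $c_p-1$. These converge under the stated condition since $\delta<1/44$. For the main argument, however, I would simply cite Lemma~\ref{lem:holomorphic} via the affine substitution, which makes the proof essentially immediate.
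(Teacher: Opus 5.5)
Your proposal is correct and matches the paper's proof: the paper deduces the lemma from Lemma~\ref{lem:holomorphic} via the identity $A_j^{(2)}(w_1,w_2)=A_j^{(1)}(w_1,w_2-1+\alpha_3+\beta_j)$. Your substitution into both half-plane conditions is the verification that the paper's one-line proof leaves implicit. Your optional direct check is also sound, since each local factor equals $1+(c_p-1)(p^{-u}+p^{-v}-p^{-u-v})$, though you do not need it.
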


\begin{proof}
This follows from Lemma~\ref{lem:holomorphic}, since
\[
A_j^{(2)}(w_1,w_2)
=
A_j^{(1)}(w_1,w_2-1+\alpha_3+\beta_j).
\]
\end{proof}
We shift the $w_2$ contour in \eqref{eq:Perron2} to
\[
\Re(w_2)=\frac12+2\delta+\epsilon.
\]
The poles of the integrand are at
\[
w_2=1-\beta_j-\alpha_2
\]
and
\[
w_2=1-\beta_j-w_1-\alpha_1.
\]
By Cauchy's residue theorem, the inner integral in \eqref{eq:Perron2}
is equal to
\begin{align}\label{eq:w2-shift-starstar}
&\operatorname{Res}_{w_2=1-\beta_j-\alpha_2}
\left(
\frac{X^{2w_2/3}}{w_2}
\zeta(w_1+w_2+\beta_j+\alpha_1)
\zeta(w_2+\beta_j+\alpha_2)
A_j^{(2)}(w_1,w_2)
\right)\nonumber\\
&\quad+
\operatorname{Res}_{w_2=1-\beta_j-w_1-\alpha_1}
\left(
\frac{X^{2w_2/3}}{w_2}
\zeta(w_1+w_2+\beta_j+\alpha_1)
\zeta(w_2+\beta_j+\alpha_2)
A_j^{(2)}(w_1,w_2)
\right)\nonumber\\
&\quad-J_1-J_2-J_3,
\end{align}
where
\[
J_1=\frac{1}{2\pi i}
\int_{1+2\delta+\epsilon+iT_2}^{1/2+2\delta+\epsilon+iT_2}
\frac{X^{2w_2/3}}{w_2}
\zeta(w_1+w_2+\beta_j+\alpha_1)
\zeta(w_2+\beta_j+\alpha_2)
A_j^{(2)}(w_1,w_2)\,dw_2,
\]
\[
J_2=\frac{1}{2\pi i}
\int_{1/2+2\delta+\epsilon+iT_2}^{1/2+2\delta+\epsilon-iT_2}
\frac{X^{2w_2/3}}{w_2}
\zeta(w_1+w_2+\beta_j+\alpha_1)
\zeta(w_2+\beta_j+\alpha_2)
A_j^{(2)}(w_1,w_2)\,dw_2,
\]
and
\[
J_3=\frac{1}{2\pi i}
\int_{1/2+2\delta+\epsilon-iT_2}^{1+2\delta+\epsilon-iT_2}
\frac{X^{2w_2/3}}{w_2}
\zeta(w_1+w_2+\beta_j+\alpha_1)
\zeta(w_2+\beta_j+\alpha_2)
A_j^{(2)}(w_1,w_2)\,dw_2.
\]
Using Lemma~\ref{lem:weyl-bound}, we obtain
\begin{align}\label{eq:error-J123}
J_1+J_2+J_3\ll X^{2/3}
T_2^{\lambda+4\lambda\delta+\epsilon}
(T_1+T_2)^{\lambda+4\lambda\delta+\epsilon}
\left(
T_2^{-1}X^{2\delta+\epsilon}+X^{-1/3+2\delta+\epsilon}
\right).
\end{align}
The residue at $w_2=1-\beta_j-\alpha_2$ is
\begin{align}\label{eqn:fixed-residue-B}
\frac{X^{2(1-\beta_j-\alpha_2)/3}}
{1-\beta_j-\alpha_2}
\frac{1}{2\pi i}
\int_{2\delta+\epsilon-iT_1}^{2\delta+\epsilon+iT_1}
\frac{X^{w_1/3}}{w_1}
\zeta(1+w_1+\alpha_1-\alpha_2)
A_j^{(2)}(w_1,1-\beta_j-\alpha_2)\,dw_1.
\end{align}
The residue at $w_2=1-\beta_j-w_1-\alpha_1$ is
\begin{align*}
&X^{2(1-\beta_j-\alpha_1)/3}\frac{1}{2\pi i}
\int_{2\delta+\epsilon-iT_1}^{2\delta+\epsilon+iT_1}
\frac{X^{-w_1/3}}
{(1-\beta_j-w_1-\alpha_1)w_1}\\
&\qquad\times
\zeta(1-w_1-\alpha_1+\alpha_2)
A_j^{(2)}(w_1,1-\beta_j-w_1-\alpha_1)\,dw_1.
\end{align*}
When moving the contour in the preceding integral to
$\Re(w_1)=\frac12-2\delta-\epsilon$,
no pole is crossed. Since
\[
\left|X^{2(1-\beta_j-\alpha_1)/3}\right|
\leq X^{2(1+2\delta)/3},
\]
Lemma~\ref{lem:weyl-bound} gives, on taking $T_1=X^{1/6}$,
\begin{align}\label{eq:error-moving-residue}
&X^{2(1-\beta_j-\alpha_1)/3}
\frac{1}{2\pi i}
\int_{2\delta+\epsilon-iT_1}^{2\delta+\epsilon+iT_1}
\frac{X^{-w_1/3}}
{(1-\beta_j-w_1-\alpha_1)w_1}\nonumber\\
&\qquad\times
\zeta(1-w_1-\alpha_1+\alpha_2)
A_j^{(2)}(w_1,1-\beta_j-w_1-\alpha_1)\,dw_1
\ll X^{1/2+2\delta+\epsilon}.
\end{align}
We now move the contour in \eqref{eqn:fixed-residue-B} to $\Re(w_1)=-\frac12+2\delta+\epsilon$. By Cauchy's residue theorem, we have
\begin{align}\label{eq:fixed-residue-w1-shift}
&\frac{X^{2(1-\beta_j-\alpha_2)/3}}
{1-\beta_j-\alpha_2}
\frac{1}{2\pi i}
\int_{2\delta+\epsilon-iT_1}^{2\delta+\epsilon+iT_1}
\frac{X^{w_1/3}}{w_1}
\zeta(1+w_1+\alpha_1-\alpha_2)
A_j^{(2)}(w_1,1-\beta_j-\alpha_2)\,dw_1
\nonumber\\
&=\frac{X^{2(1-\beta_j-\alpha_2)/3}}
{1-\beta_j-\alpha_2}
\zeta(1+\alpha_1-\alpha_2)
A_j^{(2)}(0,1-\beta_j-\alpha_2)\nonumber\\
&\quad+
\frac{X^{2(1-\beta_j-\alpha_2)/3}}
{1-\beta_j-\alpha_2}
\frac{X^{(\alpha_2-\alpha_1)/3}}
{\alpha_2-\alpha_1}
A_j^{(2)}(\alpha_2-\alpha_1,1-\beta_j-\alpha_2)
-J_4-J_5-J_6,
\end{align}
where
\[
J_4=
\frac{X^{2(1-\beta_j-\alpha_2)/3}}
{1-\beta_j-\alpha_2}
\frac{1}{2\pi i}
\int_{2\delta+\epsilon+iT_1}^{-1/2+2\delta+\epsilon+iT_1}
\frac{X^{w_1/3}}{w_1}
\zeta(1+w_1+\alpha_1-\alpha_2)
A_j^{(2)}(w_1,1-\beta_j-\alpha_2)\,dw_1,
\]
\[
J_5=
\frac{X^{2(1-\beta_j-\alpha_2)/3}}
{1-\beta_j-\alpha_2}
\frac{1}{2\pi i}
\int_{-1/2+2\delta+\epsilon+iT_1}^{-1/2+2\delta+\epsilon-iT_1}
\frac{X^{w_1/3}}{w_1}
\zeta(1+w_1+\alpha_1-\alpha_2)
A_j^{(2)}(w_1,1-\beta_j-\alpha_2)\,dw_1,
\]
and
\[
J_6=
\frac{X^{2(1-\beta_j-\alpha_2)/3}}
{1-\beta_j-\alpha_2}
\frac{1}{2\pi i}
\int_{-1/2+2\delta+\epsilon-iT_1}^{2\delta+\epsilon-iT_1}
\frac{X^{w_1/3}}{w_1}
\zeta(1+w_1+\alpha_1-\alpha_2)
A_j^{(2)}(w_1,1-\beta_j-\alpha_2)\,dw_1.
\]
By Lemma~\ref{lem:weyl-bound}, on taking $T_1=X^{1/6}$, we get
\begin{equation}\label{eq:J456}
J_4+J_5+J_6
\ll X^{19/36+2\delta+\epsilon}.
\end{equation}
Let $E^{(2)}(X,T_1,T_2)$ denote the sum of the two Perron
truncation errors in \eqref{eq:Perron2}, the contribution of
$J_1+J_2+J_3$ estimated in \eqref{eq:error-J123}, the contribution
estimated in \eqref{eq:error-moving-residue}, and the contribution
of $J_4+J_5+J_6$ estimated in \eqref{eq:J456}. Taking
\[
\lambda=\frac16,\qquad
T_1=X^{1/6},\qquad
T_2=X^{1/3+2\delta},
\]
these estimates give
\[
E^{(2)}(X,T_1,T_2)
\ll
X^{19/36+2\delta+\epsilon}.
\]
Hence,
\begin{align}\label{eqn:M1-2}
\Sigma_j^{\star\star}(X)
&=
\frac{X^{2(1-\alpha_2-\beta_j)/3}}
{1-\alpha_2-\beta_j}
\zeta(1+\alpha_1-\alpha_2)
A_j^{(2)}(0,1-\beta_j-\alpha_2)\nonumber\\
&\quad+
\frac{X^{(2-\alpha_1-\alpha_2-2\beta_j)/3}}
{(\alpha_2-\alpha_1)(1-\alpha_2-\beta_j)}
A_j^{(2)}
(\alpha_2-\alpha_1,1-\beta_j-\alpha_2)\nonumber\\
&\quad+
\mathcal O\left(
X^{19/36+2\delta+\epsilon}
\right).
\end{align}

\subsubsection*{Combination of $\Sigma_j^\star$ and
$\Sigma_j^{\star\star}$.}

Substituting \eqref{eqn:M1-1-final} and \eqref{eqn:M1-2} into
\eqref{eq:Sigma111-211-star}, and using
$\Delta<61/66$, we obtain

\begin{align*}
\Sigma_{111}(X)-\Sigma_{211}(X)
&=M_{\{\alpha_1,\alpha_2,\alpha_3\}}(X)
+\mathcal O\left(X^{61/66+10\delta/3+\epsilon}
\right),
\end{align*}
where
\begin{align}\label{eq:M-alpha}
&M_{\{\alpha_1,\alpha_2,\alpha_3\}}(X)\nonumber\\
&=\sum_{j=1}^3
\frac{
\displaystyle\prod_{\substack{i=1\\i\neq j}}^3
\zeta(1-\beta_j+\beta_i)}
{1-\beta_j-\alpha_3}
\Bigg[
X^{1-\beta_j-\alpha_3}
\Bigg(\zeta(1+\alpha_1-\alpha_3)
\zeta(1+\alpha_2-\alpha_3)
A_j^{(1)}(0,0)\nonumber\\
&\hspace{7em}
+\frac{X^{(\alpha_3-\alpha_1)/3}}
{\alpha_3-\alpha_1}
\zeta(1+\alpha_2-\alpha_3)
A_j^{(1)}(\alpha_3-\alpha_1,0)\nonumber\\
&\hspace{7em}
+\frac{X^{2(\alpha_3-\alpha_2)/3}}
{\alpha_3-\alpha_2}
\zeta(1+\alpha_1-\alpha_2)
A_j^{(1)}(0,\alpha_3-\alpha_2)\nonumber\\
&\hspace{7em}
+\frac{X^{(2\alpha_3-\alpha_1-\alpha_2)/3}}
{(\alpha_2-\alpha_1)(\alpha_3-\alpha_2)}
A_j^{(1)}
(\alpha_2-\alpha_1,\alpha_3-\alpha_2)
\Bigg)\nonumber\\
&\hspace{3em}
-X^{(1-\beta_j-\alpha_3)/3}
\Bigg(
\frac{X^{2(1-\alpha_2-\beta_j)/3}}
{1-\alpha_2-\beta_j}
\zeta(1+\alpha_1-\alpha_2)
A_j^{(2)}(0,1-\beta_j-\alpha_2)\nonumber\\
&\hspace{7em}
+
\frac{X^{(2-\alpha_1-\alpha_2-2\beta_j)/3}}
{(\alpha_2-\alpha_1)(1-\alpha_2-\beta_j)}
A_j^{(2)}
(\alpha_2-\alpha_1,1-\beta_j-\alpha_2)
\Bigg)
\Bigg].
\end{align}

The evaluations of
$\Sigma_{112}(X)-\Sigma_{212}(X)$ and
$\Sigma_{113}(X)-\Sigma_{213}(X)$ follow from the same argument by the
cyclic permutations
\[
(\alpha_1,\alpha_2,\alpha_3)
\longmapsto
(\alpha_2,\alpha_3,\alpha_1)
\]
and
\[
(\alpha_1,\alpha_2,\alpha_3)
\longmapsto
(\alpha_3,\alpha_1,\alpha_2),
\]
respectively. We therefore
define
\begin{equation}\label{eqn:M12}
M^{(12)}_{\mathcal I,\mathcal J}(X)
=
M_{\{\alpha_1,\alpha_2,\alpha_3\}}(X)
+
M_{\{\alpha_2,\alpha_3,\alpha_1\}}(X)
+
M_{\{\alpha_3,\alpha_1,\alpha_2\}}(X).
\end{equation}
Hence
\begin{equation}\label{eq:Sigma12-final}
\Sigma_{11}(X)-\Sigma_{21}(X)
=
M^{(12)}_{\mathcal I,\mathcal J}(X)
+
\mathcal O\left(
X^{61/66+10\delta/3+\epsilon}
\right).
\end{equation}

\subsection{Calculation for $\Sigma_{31}(X)$}

For fixed $l_1,l_2$, put $q=l_1l_2$ and
\[
A_q(t)=\sum_{l_3\leq t}\tau_{\mathcal J}(ql_3+1).
\]
By Abel summation, we get
\begin{align}\label{eq:Sigma31-Abel}
\sum_{l_3\leq X^{1/3}}
\tau_{\mathcal J}(ql_3+1)l_3^{-\alpha_3}
&=
X^{-\alpha_3/3}A_q(X^{1/3})
+\alpha_3\int_1^{X^{1/3}}
A_q(t)t^{-\alpha_3-1}\,dt.
\end{align}
After the substitution $n=ql_3+1$, we obtain
\[
A_q(t)=
\sum_{\substack{n\leq qt+1\\ n\equiv1\bmod q}}
\tau_{\mathcal J}(n)-1.
\]
The term $-1$ contributes $-1$ to the right-hand side of
\eqref{eq:Sigma31-Abel}. After summing over $l_1,l_2$, its contribution
is $\ll X^{2/3+2\delta/3+\epsilon}$.
Using \eqref{eq:maximal-conjecture} and the asymptotic formula
obtained in \eqref{eq:main-asymptotic}, with $Y=qt+1$, we have
\begin{align*}
\sum_{\substack{n\leq qt+1\\ n\equiv1\bmod q}}
\tau_{\mathcal J}(n)
&=\sum_{j=1}^3\frac{\displaystyle\prod_{\substack{i=1\\i\neq j}}^3\zeta(1-\beta_j+\beta_i)}{\varphi(q)(1-\beta_j)}
f_q(1-\beta_j)(qt+1)^{1-\beta_j}\\&\quad+
\mathcal O\left(\frac{q^\epsilon(qt+1)^{2/3+\delta+\epsilon}}
{\varphi(q)}\right)+E(qt+1;1,q).
\end{align*}
Using the binomial expansion,
\[
(qt+1)^{1-\beta_j}=(qt)^{1-\beta_j}
\left(1+\sum_{n=1}^{\infty}
\binom{1-\beta_j}{n}(qt)^{-n}\right),
\]
we have
\[(qt+1)^{1-\beta_j}=q^{1-\beta_j}t^{1-\beta_j}+\mathcal O\left((qt)^\delta\right).\]
The contribution of the terms arising from the error in this formula is $\ll X^{2\delta+\epsilon}$. Moreover, the contribution of the  error term in \eqref{eq:main-asymptotic} is $\ll X^{2/3+2\delta+\epsilon}$,
while \eqref{eq:maximal-conjecture} gives
\[
\ll X^{\Delta+\delta+\epsilon}.
\]
The contribution of the residue at $s=1-\beta_j$ to
\eqref{eq:Sigma31-Abel}, apart from the common factor
\[
\frac{\displaystyle\prod_{\substack{i=1\\i\neq j}}^3
\zeta(1-\beta_j+\beta_i)}
{\varphi(q)}
f_q(1-\beta_j),
\]
is
\begin{align*}
&\frac{q^{1-\beta_j}X^{(1-\beta_j-\alpha_3)/3}}
{1-\beta_j}
+
\frac{\alpha_3q^{1-\beta_j}}{1-\beta_j}
\int_1^{X^{1/3}}t^{-\beta_j-\alpha_3}\,dt\\
&\qquad=
\frac{q^{1-\beta_j}X^{(1-\beta_j-\alpha_3)/3}}
{1-\beta_j-\alpha_3}
-
\frac{\alpha_3q^{1-\beta_j}}
{(1-\beta_j)(1-\beta_j-\alpha_3)}.
\end{align*}
After restoring the common factor and the weights
$l_1^{-\alpha_1}l_2^{-\alpha_2}$, the contribution to $\Sigma_{31}(X)$ of the second
term on the right is bounded by
\begin{align*}
&\sum_{l_1,l_2\leq X^{1/3}}
\left|l_1^{-\alpha_1}l_2^{-\alpha_2}\right|
\left|
\frac{\prod_{i\neq j}\zeta(1-\beta_j+\beta_i)}
{\varphi(q)}
f_q(1-\beta_j)
\frac{\alpha_3q^{1-\beta_j}}
{(1-\beta_j)(1-\beta_j-\alpha_3)}
\right| \\
&\qquad\ll
\sum_{l_1,l_2\leq X^{1/3}}
(l_1l_2)^{2\delta+\epsilon}
\ll
X^{2/3+4\delta/3+\epsilon},
\end{align*}
where $q=l_1l_2$.
Consequently,
\begin{align}\label{eq:Sigma31}
\Sigma_{31}(X)
&=
\sum_{j=1}^3
\frac{X^{(1-\beta_j-\alpha_3)/3}
\displaystyle\prod_{\substack{i=1\\i\neq j}}^3
\zeta(1-\beta_j+\beta_i)}
{1-\beta_j-\alpha_3}
\sum_{l_1\leq X^{1/3}}l_1^{1-\beta_j-\alpha_1}
\sum_{l_2\leq X^{1/3}}l_2^{1-\beta_j-\alpha_2}
\frac{f_{l_1l_2}(1-\beta_j)}
{\varphi(l_1l_2)}
\nonumber\\
&\quad+
\mathcal O\left(
X^{\Delta+\delta+\epsilon}
+
X^{2/3+2\delta+\epsilon}
\right).
\end{align}
We now evaluate the double sum in \eqref{eq:Sigma31}. Applying
Lemma~\ref{lem:truncated-perron}, we obtain
\begin{align}\label{eq:Perron3}
\sum_{l_1\leq X^{1/3}}&l_1^{1-\beta_j-\alpha_1}
\sum_{l_2\leq X^{1/3}}l_2^{1-\beta_j-\alpha_2}
\frac{f_{l_1l_2}(1-\beta_j)}{\varphi(l_1l_2)}
\nonumber\\
&=
\frac{1}{(2\pi i)^2}
\int_{1+2\delta+\epsilon-iT_1}^{1+2\delta+\epsilon+iT_1}
\int_{1+2\delta+\epsilon-iT_2}^{1+2\delta+\epsilon+iT_2}
\frac{X^{w_1/3}}{w_1}
\frac{X^{w_2/3}}{w_2}
\nonumber\\
&\qquad\times
\zeta(w_1+\beta_j+\alpha_1)
\zeta(w_2+\beta_j+\alpha_2)
A_j^{(3)}(w_1,w_2)\,dw_2\,dw_1
\nonumber\\
&\quad+
\mathcal O\left(
\frac{X^{2/3+4\delta/3+\epsilon}}{T_1}
\right)
+
\mathcal O\left(
\frac{X^{2/3+4\delta/3+\epsilon}}{T_2}
\right),
\end{align}
where $A_j^{(3)}(w_1,w_2)$ is the arithmetic factor given in \eqref{eq:A3-intro}.

\begin{lemma}\label{lem:holomorphic3}
The function $A_j^{(3)}(w_1,w_2)$  is analytic in the region
\[
\Re(w_1)>\frac12-\Re(\alpha_1)-\Re(\beta_j)
\]
and
\[
\Re(w_2)>\frac12-\Re(\alpha_2)-\Re(\beta_j).
\]
\end{lemma}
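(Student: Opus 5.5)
The plan is to reduce the statement directly to Lemma~\ref{lem:holomorphic}, exactly as Lemma~\ref{lem:holomorphic2} was deduced from it. By \eqref{eq:A3-intro},
\[
A_j^{(3)}(w_1,w_2)=A_j^{(1)}(u_1,u_2),\qquad u_1=w_1-w_2,\quad u_2=w_2-1+\alpha_3+\beta_j .
\]
The map $(w_1,w_2)\mapsto(u_1,u_2)$ is affine, hence holomorphic, in $(w_1,w_2)$. Therefore $A_j^{(3)}$ is analytic wherever $(u_1,u_2)$ lies in the region of analyticity of $A_j^{(1)}$ given by Lemma~\ref{lem:holomorphic}.

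The only thing to check is that the two inequalities of Lemma~\ref{lem:holomorphic} translate into the stated ones. It is cleaner to use the form in which they appear in that lemma's proof, namely
\[
\Re(u_2+\alpha_2-\alpha_3)>-\tfrac12,\qquad \Re(u_1+u_2+\alpha_1-\alpha_3)>-\tfrac12 .
\]
This form is equivalent to the statement of Lemma~\ref{lem:holomorphic}.

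For the first condition, $u_2+\alpha_2-\alpha_3=w_2-1+\alpha_2+\beta_j$. So it becomes
\[
\Re(w_2)>\tfrac12-\Re(\alpha_2)-\Re(\beta_j).
\]

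For the second condition, the $w_2$ cancels: $u_1+u_2+\alpha_1-\alpha_3=w_1-1+\alpha_1+\beta_j$. So it becomes
\[
\Re(w_1)>\tfrac12-\Re(\alpha_1)-\Re(\beta_j).
\]

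These are precisely the two conditions in the statement. Moreover, by the proof of Lemma~\ref{lem:holomorphic}, the Euler product defining $A_j^{(1)}(u_1,u_2)$ converges absolutely, and locally uniformly, there. Hence the composed function is analytic on the stated region.

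There is no real obstacle; the only point requiring care is the bookkeeping. The region of Lemma~\ref{lem:holomorphic} is expressed in terms of $u_2$ and $u_1+u_2$, not $u_1$ alone, and the substitution $u_1=w_1-w_2$ is designed so that $u_1+u_2$ depends only on $w_1$. This is what makes the resulting region a product of two half-planes.
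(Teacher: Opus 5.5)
Your proposal is correct and is the same argument as the paper's. The paper simply notes that $A_j^{(3)}(w_1,w_2)=A_j^{(1)}(w_1-w_2,\,w_2-1+\alpha_3+\beta_j)$ and appeals to Lemma~\ref{lem:holomorphic}; your explicit check that $u_2+\alpha_2-\alpha_3=w_2-1+\alpha_2+\beta_j$ and $u_1+u_2+\alpha_1-\alpha_3=w_1-1+\alpha_1+\beta_j$ fills in the bookkeeping the paper leaves implicit.
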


\begin{proof}
Since
\[
A_j^{(3)}(w_1,w_2)
=
A_j^{(1)}
(w_1-w_2,w_2-1+\alpha_3+\beta_j),
\]
the result follows from Lemma~\ref{lem:holomorphic}.
\end{proof}
Fix $w_1$ with $\Re(w_1)=1+2\delta+\epsilon$. We move the $w_2$
contour in \eqref{eq:Perron3} to
\[
\Re(w_2)=\frac12+2\delta+\epsilon.
\]
The only pole crossed is at
\[
w_2=1-\beta_j-\alpha_2.
\]
Applying Cauchy's residue theorem yields
\begin{align*}
&\frac{1}{2\pi i}
\int_{1+2\delta+\epsilon-iT_2}^{1+2\delta+\epsilon+iT_2}
\frac{X^{w_2/3}}{w_2}
\zeta(w_2+\beta_j+\alpha_2)
A_j^{(3)}(w_1,w_2)\,dw_2\\
&=
\frac{X^{(1-\beta_j-\alpha_2)/3}}
{1-\beta_j-\alpha_2}
A_j^{(3)}(w_1,1-\beta_j-\alpha_2)
-K_1-K_2-K_3,
\end{align*}
where
\[
K_1=
\frac{1}{2\pi i}
\int_{1+2\delta+\epsilon+iT_2}^{1/2+2\delta+\epsilon+iT_2}
\frac{X^{w_2/3}}{w_2}
\zeta(w_2+\beta_j+\alpha_2)
A_j^{(3)}(w_1,w_2)\,dw_2,
\]
\[
K_2=
\frac{1}{2\pi i}
\int_{1/2+2\delta+\epsilon+iT_2}^{1/2+2\delta+\epsilon-iT_2}
\frac{X^{w_2/3}}{w_2}
\zeta(w_2+\beta_j+\alpha_2)
A_j^{(3)}(w_1,w_2)\,dw_2,
\]
and
\[
K_3=
\frac{1}{2\pi i}
\int_{1/2+2\delta+\epsilon-iT_2}^{1+2\delta+\epsilon-iT_2}
\frac{X^{w_2/3}}{w_2}
\zeta(w_2+\beta_j+\alpha_2)
A_j^{(3)}(w_1,w_2)\,dw_2.
\]
By Lemma~\ref{lem:weyl-bound}, we obtain
\[
K_1+K_3
\ll
X^{(1+2\delta)/3+\epsilon}
T_2^{-1+\lambda+\epsilon},
\]
and
\[
K_2\ll X^{1/6+2\delta/3+\epsilon}T_2^{\lambda+\epsilon}.
\]
Using these estimates in \eqref{eq:Perron3}, the terms containing
$K_1,K_2,K_3$ are
\begin{equation}\label{eq:error31-w2}
\ll X^{2/3+4\delta/3+\epsilon}
T_2^{-1+\lambda}+X^{1/2+4\delta/3+\epsilon}
T_2^\lambda.
\end{equation}
The residue at $w_2=1-\beta_j-\alpha_2$ gives
\begin{align}\label{eqn:residue31-w2}
&\frac{X^{(1-\beta_j-\alpha_2)/3}}
{1-\beta_j-\alpha_2}
\frac{1}{2\pi i}
\int_{1+2\delta+\epsilon-iT_1}^{1+2\delta+\epsilon+iT_1}
\frac{X^{w_1/3}}{w_1}
\zeta(w_1+\beta_j+\alpha_1)
\nonumber\\&\qquad\times
A_j^{(3)}(w_1,1-\beta_j-\alpha_2)\,dw_1.
\end{align}
We move this contour to $\Re(w_1)=\frac12+2\delta+\epsilon$.
The only pole crossed is at
\[
w_1=1-\beta_j-\alpha_1.
\]
Writing out the three remaining integrals exactly as above, and using
Lemma~\ref{lem:weyl-bound}, their sum is
\begin{equation}\label{eq:error31-w1}
\ll
X^{2/3+4\delta/3+\epsilon}
T_1^{-1+\lambda}
+
X^{1/2+4\delta/3+\epsilon}
T_1^\lambda.
\end{equation}
The residue is
\begin{align}\label{eqn:residue31-w1}
\frac{X^{(1-\beta_j-\alpha_2)/3}}
{1-\beta_j-\alpha_2}
\frac{X^{(1-\beta_j-\alpha_1)/3}}
{1-\beta_j-\alpha_1}
A_j^{(3)}
(1-\beta_j-\alpha_1,1-\beta_j-\alpha_2).
\end{align}
Let $E^{(3)}(X,T_1,T_2)$ denote the sum of the two Perron
truncation errors in \eqref{eq:Perron3}, the contribution from the
$w_2$ contour shift estimated in \eqref{eq:error31-w2}, and the
contribution from the subsequent $w_1$ contour shift estimated in
\eqref{eq:error31-w1}. Thus,
\begin{align*}
E^{(3)}(X,T_1,T_2)
\ll{}&
X^{2/3+4\delta/3+\epsilon}
\left(
T_1^{-1+\lambda}+T_2^{-1+\lambda}
\right)\\
&+
X^{1/2+4\delta/3+\epsilon}
\left(
T_1^\lambda+T_2^\lambda
\right)
+
X^{2/3+4\delta/3+\epsilon}
\left(
T_1^{-1}+T_2^{-1}
\right).
\end{align*}
Taking
\[
\lambda=\frac16,\qquad T_1=T_2=X^{1/6},
\]
gives
\begin{equation*}
E^{(3)}(X,T_1,T_2)
\ll X^{19/36+4\delta/3+\epsilon}.
\end{equation*}
Therefore,
\begin{align}\label{eq:Sigma31-final}
\Sigma_{31}(X)
&=
\sum_{j=1}^3
\frac{X^{1-\beta_j-\alpha_1/3-\alpha_2/3-\alpha_3/3}}
{(1-\beta_j-\alpha_1)(1-\beta_j-\alpha_2)(1-\beta_j-\alpha_3)}
\prod_{\substack{i=1\\i\neq j}}^3
\zeta(1-\beta_j+\beta_i)\nonumber\\
&\qquad\times
A_j^{(3)}(1-\beta_j-\alpha_1,1-\beta_j-\alpha_2)
+\mathcal O\left(
X^{\Delta+\delta+\epsilon}+X^{31/36+2\delta+\epsilon}
\right).
\end{align}
We denote the main term in \eqref{eq:Sigma31-final} by
\begin{equation}\label{eqn:M3}
M^{(3)}_{\mathcal I,\mathcal J}(X)
=
\sum_{j=1}^3
\frac{X^{1-\beta_j-\alpha_1/3-\alpha_2/3-\alpha_3/3}}
{(1-\beta_j-\alpha_1)(1-\beta_j-\alpha_2)(1-\beta_j-\alpha_3)}
\prod_{\substack{i=1\\i\neq j}}^3
\zeta(1-\beta_j+\beta_i)
A_j^{(3)}(1-\beta_j-\alpha_1,1-\beta_j-\alpha_2).
\end{equation}

\subsection{Completion of the Proof}

Combining \eqref{eq:Sigma12-final} and \eqref{eq:Sigma31-final}, we obtain
\begin{align*}
D_{\mathcal I,\mathcal J}(X,1)
&=
M_{\mathcal I,\mathcal J}^{(12)}(X)
+
M_{\mathcal I,\mathcal J}^{(3)}(X)+
\mathcal O\left(X^{61/66+10\delta/3+\epsilon}
+X^{\Delta+\delta+\epsilon}
+X^{31/36+2\delta+\epsilon}
\right).
\end{align*}
We now simplify the sum
$M_{\mathcal I,\mathcal J}^{(12)}(X)
+M_{\mathcal I,\mathcal J}^{(3)}(X)$. Fix $j\in\{1,2,3\}$. We first consider the term containing
$A_j^{(1)}(0,\alpha_3-\alpha_2)$ in
$M_{\{\alpha_1,\alpha_2,\alpha_3\}}(X)$. It is
\[
\frac{
X^{1-\beta_j-\alpha_3}
\prod_{\substack{i=1\\i\neq j}}^3\zeta(1-\beta_j+\beta_i)}
{1-\beta_j-\alpha_3}
\frac{X^{2(\alpha_3-\alpha_2)/3}}
{\alpha_3-\alpha_2}
\zeta(1+\alpha_1-\alpha_2)
A_j^{(1)}(0,\alpha_3-\alpha_2).
\]
The same zeta factor occurs in the term containing
$A_j^{(1)}(\alpha_2-\alpha_3,0)$ in
$M_{\{\alpha_3,\alpha_1,\alpha_2\}}(X)$, namely
\[
\frac{
X^{1-\beta_j-\alpha_2}
\prod_{\substack{i=1\\i\neq j}}^3\zeta(1-\beta_j+\beta_i)}
{1-\beta_j-\alpha_2}
\frac{X^{(\alpha_2-\alpha_3)/3}}
{\alpha_2-\alpha_3}
\zeta(1+\alpha_1-\alpha_2)
A_j^{(1)}(\alpha_2-\alpha_3,0),
\]
where $A_j^{(1)}$ in this expression is formed after the cyclic
permutation
$(\alpha_1,\alpha_2,\alpha_3)\mapsto
(\alpha_3,\alpha_1,\alpha_2)$. The third term that we need is the one containing
$A_j^{(2)}(0,1-\beta_j-\alpha_2)$ in
$M_{\{\alpha_1,\alpha_2,\alpha_3\}}(X)$:
\[
-\frac{
X^{(1-\beta_j-\alpha_3)/3}
\prod_{\substack{i=1\\i\neq j}}^3\zeta(1-\beta_j+\beta_i)}
{1-\beta_j-\alpha_3}
\frac{X^{2(1-\alpha_2-\beta_j)/3}}
{1-\alpha_2-\beta_j}
\zeta(1+\alpha_1-\alpha_2)
A_j^{(2)}(0,1-\beta_j-\alpha_2).
\]
By \eqref{eq:A2-intro}, we have
\[
A_j^{(2)}(0,1-\beta_j-\alpha_2)
=
A_j^{(1)}(0,\alpha_3-\alpha_2).
\]
We also check directly from \eqref{eq:A1-intro} that the
$A_j^{(1)}$ factor in the second expression is the same. With the original ordering of the $\alpha$'s, substituting
$(w_1,w_2)=(0,\alpha_3-\alpha_2)$ into \eqref{eq:A1-intro} gives
\[
w_1+w_2+\alpha_1-\alpha_3=\alpha_1-\alpha_2,
\qquad
w_2+\alpha_2-\alpha_3=0.
\]
For the cyclically permuted ordering
$(\alpha_3,\alpha_1,\alpha_2)$, substituting
$(w_1,w_2)=(\alpha_2-\alpha_3,0)$ gives the same two values in the opposite order. Substitution in \eqref{eq:A1-intro} therefore gives the same Euler product.

The three powers of $X$ are also the same; in each case the exponent is
\[
1-\beta_j-\frac{2\alpha_2+\alpha_3}{3}.
\]
We may therefore factor out
\[
X^{1-\beta_j-(2\alpha_2+\alpha_3)/3}
\zeta(1+\alpha_1-\alpha_2)
\prod_{\substack{i=1\\i\neq j}}^3
\zeta(1-\beta_j+\beta_i)
A_j^{(1)}(0,\alpha_3-\alpha_2).
\]
The sum of the remaining coefficients is
\[
\frac{1}
{(1-\beta_j-\alpha_3)(\alpha_3-\alpha_2)}
+
\frac{1}
{(1-\beta_j-\alpha_2)(\alpha_2-\alpha_3)}
-
\frac{1}
{(1-\beta_j-\alpha_3)(1-\beta_j-\alpha_2)}
=0.
\]
Hence these three terms cancel. Applying the same calculation after the other two cyclic permutations of
$(\alpha_1,\alpha_2,\alpha_3)$ gives two further cancellations.

It remains to consider the terms containing
$A_j^{(1)}(\alpha_2-\alpha_1,\alpha_3-\alpha_2)$ and
$A_j^{(2)}(\alpha_2-\alpha_1,1-\beta_j-\alpha_2)$ in
$M_{\{\alpha_1,\alpha_2,\alpha_3\}}(X)$, together with the analogous terms in
$M_{\{\alpha_2,\alpha_3,\alpha_1\}}(X)$ and
$M_{\{\alpha_3,\alpha_1,\alpha_2\}}(X)$. By \eqref{eq:A2-intro},
\[
A_j^{(2)}
(\alpha_2-\alpha_1,1-\beta_j-\alpha_2)
=
A_j^{(1)}
(\alpha_2-\alpha_1,\alpha_3-\alpha_2).
\]
Also, \eqref{eq:A3-intro} gives
\[
A_j^{(3)}
(1-\beta_j-\alpha_1,1-\beta_j-\alpha_2)
=
A_j^{(1)}
(\alpha_2-\alpha_1,\alpha_3-\alpha_2).
\]
For the last $A_j^{(1)}$ factor, the two expressions
$w_1+w_2+\alpha_1-\alpha_3$ and
$w_2+\alpha_2-\alpha_3$ appearing in
\eqref{eq:A1-intro} are both zero. Thus the same arithmetic factor occurs after each of the two cyclic permutations.

The six terms coming from the three cyclic copies of
$M_{\{\alpha_1,\alpha_2,\alpha_3\}}(X)$, together with the term in
$M_{\mathcal I,\mathcal J}^{(3)}(X)$, all contain
\[
X^{1-\beta_j-(\alpha_1+\alpha_2+\alpha_3)/3}
\prod_{\substack{i=1\\i\neq j}}^3
\zeta(1-\beta_j+\beta_i)
A_j^{(3)}
(1-\beta_j-\alpha_1,1-\beta_j-\alpha_2).
\]

It remains only to compare their coefficients. Put
$x_r=1-\beta_j-\alpha_r$ for $r=1,2,3$. For
$M_{\{\alpha_1,\alpha_2,\alpha_3\}}(X)$, the coefficients of the two terms under consideration are
\[
\frac{1}{x_3(x_1-x_2)(x_2-x_3)}
\qquad\text{and}\qquad
-\frac{1}{x_3x_2(x_1-x_2)}.
\]
Their sum is
\[
\frac{1}{x_2(x_1-x_2)(x_2-x_3)}.
\]
The two cyclic permutations give, respectively,
\[
\frac{1}{x_3(x_2-x_3)(x_3-x_1)}
\qquad\text{and}\qquad
\frac{1}{x_1(x_3-x_1)(x_1-x_2)}.
\]
Adding these three expressions gives $-\frac{1}{x_1x_2x_3}$.
On the other hand, by \eqref{eqn:M3}, the coefficient of the
corresponding term in $M_{\mathcal I,\mathcal J}^{(3)}(X)$ is
$\frac{1}{x_1x_2x_3}$. These terms therefore cancel as well.  After all these cancellations, the only term from
$M_{\{\alpha_1,\alpha_2,\alpha_3\}}(X)$ that remains in
$M_{\mathcal I,\mathcal J}^{(12)}(X)
+M_{\mathcal I,\mathcal J}^{(3)}(X)$ is
\[
\sum_{j=1}^3
\frac{X^{1-\beta_j-\alpha_3}}
{1-\beta_j-\alpha_3}
\prod_{\substack{i=1\\i\neq j}}^3
\zeta(1-\beta_j+\beta_i)
\zeta(1+\alpha_1-\alpha_3)
\zeta(1+\alpha_2-\alpha_3)
A_j^{(1)}(0,0).
\]
Together with the two expressions obtained by cyclically permuting
$(\alpha_1,\alpha_2,\alpha_3)$, this is precisely
$Main_{\mathcal I,\mathcal J}(X)$ as defined in
\eqref{eq:Main-final}. Hence,
\[M_{\mathcal I,\mathcal J}^{(12)}(X)
+M_{\mathcal I,\mathcal J}^{(3)}(X)
=Main_{\mathcal I,\mathcal J}(X).
\]
Finally, since $\Delta<61/66$,
\[
X^{\Delta+\delta+\epsilon}
+
X^{31/36+2\delta+\epsilon}
\ll
X^{61/66+10\delta/3+\epsilon}.
\]
Therefore,
\[
D_{\mathcal I,\mathcal J}(X,1)
=
Main_{\mathcal I,\mathcal J}(X)
+
\mathcal O\left(
X^{61/66+10\delta/3+\epsilon}
\right).
\]
This completes the proof of Theorem~\ref{maintheorem}.
\bibliographystyle{siam}
\bibliography{ref.bib}

\end{document}